\documentclass[11pt]{amsart}

\usepackage{amsmath}
\usepackage[font=small]{caption}
\usepackage{amsthm}
\usepackage{amsfonts}
\usepackage[pdftex]{graphicx}
\usepackage[margin=.5cm, format=hang]{subfig}
\usepackage[alphabetic]{amsrefs}
\usepackage{amssymb}                                                                                                                                                                                                                                                                                                                                                                                                                                                                                                                                                                                                                                                                                                                                                                                                 
\usepackage{pinlabel}
\usepackage{morefloats}
\usepackage{enumerate}
\usepackage{fullpage}
\usepackage{array}
\usepackage{multirow}
\usepackage{sidecap}
\usepackage{hhline}
\usepackage{multirow}
\usepackage{blkarray}
\usepackage{xcolor}
\usepackage{comment}
\usepackage{pinlabel}
\usepackage{subfig}
\usepackage{nicefrac}

\newtheorem{thm}{Theorem}[section]
\newtheorem{rmk}[thm]{Remark}
\newtheorem{prop}[thm]{Proposition}
\newtheorem{lem}[thm]{Lemma}
\newtheorem{ex}[thm]{Example}

\newtheorem{cor}[thm]{Corollary}

\usepackage{mathtools}

\DeclarePairedDelimiter{\abs}{\lvert}{\rvert}
\DeclarePairedDelimiter{\pair}{\langle}{\rangle}

\newcommand{\Z}{\mathbb{Z}}

\title{A note on concordance properties of fibers in Seifert homology spheres}

\author[Tye Lidman]{Tye Lidman}
\address{Department of Mathematics, North Carolina State University, Raleigh, NC 27607}
\email{tlid@math.ncsu.edu}

\author[Eamonn Tweedy]{Eamonn Tweedy}
\address{Department of Mathematics, Widener University, Chester, PA 19013}
\email{etweedy@widener.edu}

\begin{document}
\maketitle

\section*{Abstract} In this note, we collect various properties of Seifert homology spheres from the viewpoint of Dehn surgery along a Seifert fiber.  We expect that many of these are known to various experts, but include them in one place which we hope to be useful in the study of concordance and homology cobordism.

\section{Introduction}

Seifert fibered spaces play an important role in three-manifold topology, comprising six of the eight Thurston geometries.  These three-manifolds, defined as circle bundles over a two-dimensional orbifold, provide a valuable playground for the low-dimensional topologist, often being the choice for computations of a new three-manifold invariant.  
In this note, we focus on the case of Seifert fibered integral homology spheres.  Aside from $S^3$, a Seifert homology sphere $\Sigma$ is determined by $n$-relatively prime integers $a_1,\ldots,a_n$ with $n \geq 3$ and each $a_i \geq 2$.  In this paper, we will be interested in the homology cobordism type of $\Sigma(a_1,\ldots,a_n)$ and the concordance class of a Seifert fiber in $\Sigma(a_1,\ldots,a_n)$.  Even in this restricted class of three-manifolds, the theory is quite rich; for example, the homology spheres $\Sigma(2,3,6k-1)$ are all linearly independent in the homology cobordism group \cite{Fur}.

Our first theorem concerns the Heegaard Floer $d$-invariants of surgery on a Seifert fiber in a Seifert homology sphere.  Recall that $\Theta^H_3$ denotes the group of homology three-spheres modulo smooth homology cobordism, i.e. $Y_1$ and $Y_2$ are homology cobordant if they cobound a smooth homology $S^3 \times I$.  In \cite{OzSz1}, Ozsv\'ath and Szab\'o use Heegaard Floer homology to construct a surjective homomorphism $d:\Theta^H_3 \to 2\mathbb{Z}$ using Heegaard Floer homology.      

\begin{thm}\label{thm:d-fiber}
Let $Y = \Sigma(a_1,\ldots,a_n)$ be a Seifert homology sphere, oriented as the boundary of a negative definite plumbing.  If $K$ is a fiber in a Seifert fibration for $Y$, then $d(Y_{-1/m}(K)) = d(Y)$ for all integers $m \geq 0$.  Further, there are at most two values in the set $\{d(Y_{1/m}(K))\}_{m\in \mathbb{Z}}$.   
\end{thm}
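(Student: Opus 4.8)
The plan is to reduce both statements to a single fact --- that $d$ is unchanged by $(-1/m)$-surgery on a fiber for $m\geq 1$ --- which I would prove using the rational surgery formula together with the computation of the correction terms of Seifert homology spheres from their plumbing graphs.

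I would first observe that the second statement follows from the first. By the rational surgery formula for $HF^+$ (Ozsv\'ath--Szab\'o, with the Ni--Wu computation carried out over a general integer homology sphere in place of $S^3$), for $m\geq 1$ one has
$$d(Y_{1/m}(K)) = d(Y) - 2V_0(Y,K), \qquad d(Y_{-1/m}(K)) = d(Y) + 2V_0(-Y,\overline{K}),$$
where $\overline{K}\subset -Y$ is the mirror of $K$ and the local invariants $V_0$ are non-negative; in particular $d(Y_{1/m}(K))$ is already independent of $m$ for $m\geq 1$. Since $Y_{1/0}(K)=Y$ and $Y_{1/m}(K)=Y_{-1/|m|}(K)$ for $m<0$, the set $\{d(Y_{1/m}(K))\}_{m\in\mathbb Z}$ is exactly $\{\,d(Y)-2V_0(Y,K)\,\}\cup\{\,d(Y_{-1/m}(K)):m\geq 0\,\}$. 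Thus, once we know $d(Y_{-1/m}(K))=d(Y)$ for all $m\geq 0$ (equivalently $V_0(-Y,\overline K)=0$), this set has at most two elements.

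For the first statement I would record the classical fact that surgery on a Seifert fiber is again Seifert fibered: if $K$ is a fiber of multiplicity $\alpha$ (the regular fiber being the case $\alpha=1$), then $(-1/m)$-surgery on $K$ yields the Seifert homology sphere whose multiplicities coincide with those of $Y$ except that $\alpha$ is replaced by $\alpha+m\Delta$, where $\Delta$ is the product of the multiplicities of the remaining fibers; so one obtains a family of Seifert homology spheres in which a single multiplicity runs through an arithmetic progression, each oriented as the boundary of a star-shaped negative-definite plumbing obtained from that of $Y$ by lengthening one arm. (The $+1/m$-surgeries behave symmetrically, with the relevant multiplicity replaced by $|\alpha-m\Delta|$.) The inequality $d(Y_{-1/m}(K))\geq d(Y)$ then follows from the Ozsv\'ath--Szab\'o inequality applied to the negative-definite two-handle cobordism from $Y$ to $Y_{-1}(K)$, together with the $m$-independence noted above, so what remains is the reverse inequality $d(Y_{-1/m}(K))\leq d(Y)$.

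For this I would invoke the explicit computation of the correction term of a Seifert homology sphere from its negative-definite plumbing graph --- the Ozsv\'ath--Szab\'o algorithm for plumbed three-manifolds, or equivalently N\'emethi's graded-root/$\tau$-function description (or the lattice-point formulas derived from it) --- and verify that lengthening the arm attached to the vertex carrying the varying multiplicity leaves the minimum of the relevant counting function unchanged, and in particular keeps $d$ equal to $d(Y)$. Heuristically, the varying multiplicity affects the counting function only far above the portion that governs $d$, so the value stabilizes at once; running the same analysis for the $|\alpha-m\Delta|$ family then exhibits directly the (at most) one extra value of $d$ occurring among positive surgeries. I expect this uniform control of the $d$-invariant over an infinite family of plumbings to be the main obstacle; by contrast, the reductions above are purely formal.
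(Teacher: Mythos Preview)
Your reduction via the rational surgery formula is sound and, in a sense, front-loads the easy half: once you know $d(Y_{\pm 1/m}(K))$ is independent of $m\geq 1$ and that $V_0\geq 0$, everything comes down to showing $d(Y_{-1}(K))\leq d(Y)$. The paper does not use the surgery formula at all; instead it identifies each $Y_{1/m}(K)$ explicitly as $\pm\Sigma(a_1,\dots,a_{n-1},|a_n-m\alpha|)$ and proves directly (its Proposition~4.1) that $d(\Sigma(a_1,\dots,a_{n-1},a_n))$ depends only on the residue of $a_n$ mod $\alpha$. Both routes arrive at the same core question: why is $d$ unchanged when one lengthens an arm of the plumbing?

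Here is where you leave a gap. You describe this as the ``main obstacle'' and propose to attack it by analyzing N\'emethi's $\tau$-function or the Ozsv\'ath--Szab\'o counting function and arguing that the extra arm affects it only ``far above the portion that governs $d$''. That heuristic is not obviously easy to make precise, and you do not carry it out. The paper's argument is much simpler and bypasses any such analysis: since $Y' = Y_{-1}(K)$ is obtained by attaching a $-1$-framed $2$-handle to an integer homology sphere, the intersection form of the new plumbing $P(\Gamma')$ splits as $L(\Gamma)\oplus(-\mathbb{Z})$. The only thing to check is that the appended vertex has weight $\leq -2$, so that $\Gamma'$ still satisfies the hypotheses of the Ozsv\'ath--Szab\'o plumbing formula; the paper verifies this via a short fiber-slope/distance argument. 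Once that is in hand, $d(Y')=d(L(\Gamma'))=d(L(\Gamma))+d(-\mathbb{Z})=d(Y)+0$, and the ``main obstacle'' evaporates. So your outline is correct, but the step you flag as hardest is in fact a one-line lattice-splitting observation that you missed.
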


\begin{rmk}
Note that the cardinality of $\{d(Y_{1/m}(K))\}_{m\in \mathbb{Z}}$ can be two, such as for the singular fiber of order 5 in $\Sigma(2,3,5)$, or one, as in the case of the singular fiber of order 7 in $\Sigma(2,3,5,7)$.  
\end{rmk}


While the above theorem does not give explicit values of the $d$-invariants of surgery or even how the two values might differ, this is still strong enough to provide certain four-dimensional obstructions.  Suppose that $Y_1$ and $Y_2$ are homology cobordant homology spheres.  Then two knots $K_i \subset Y_i$ are said to be {\em homology concordant} if they cobound a smooth annulus in some homology cobordism between $Y_1$ and $Y_2$. Note that for knots in $S^3$, homology concordance is a potentially weaker relation than the usual notion of smooth concordance, where such an annulus must sit in $S^3 \times I$.  

\begin{cor}\label{cor:d-concordance}
Let $J_m$ be the $(2,4m-1)$-cable of the negative torus knot $T_{-2,6m+1}$, where $m\geq 1$.  Then $J_m$ is not homology concordant to a Seifert fiber in any Seifert homology sphere.  More generally, for any homology sphere $Z$, if $Y = S^3_{1/n}(J_m) \# Z$ and $\tilde{J}$ is the connected sum of the core of surgery with the unknot in $Z$, then there is no homology concordance from $\tilde{J}$ to a Seifert fiber in any Seifert homology sphere.  
\end{cor}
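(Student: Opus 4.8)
The plan is to convert a hypothetical homology concordance into an equality of $d$-invariants of surgeries and then contradict Theorem~\ref{thm:d-fiber}. The first ingredient is the standard fact that homology concordance controls surgeries: if $K_1\subset Y_1$ and $K_2\subset Y_2$ are homology concordant via an annulus $A$ in a homology cobordism $W$, then $1/k$-surgery along $A\subset W$ yields a homology cobordism between $(Y_1)_{1/k}(K_1)$ and $(Y_2)_{1/k}(K_2)$, so these have equal $d$-invariants for every integer $k$. Hence, if a knot $\widetilde{J}\subset Y$ were homology concordant to a Seifert fiber $K$ in a Seifert homology sphere $\Sigma'$, then $\{d(Y_{1/k}(\widetilde{J}))\}_{k\in\Z}=\{d(\Sigma'_{1/k}(K))\}_{k\in\Z}$, and this set has at most two elements by Theorem~\ref{thm:d-fiber}; the bound is independent of the orientation $\Sigma'$ carries, because the slope set $\{1/k:k\in\Z\}$ is invariant under $r\mapsto -r$.

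The second ingredient is an explicit description of this surgery family. For $L\subset S^3$ the surgery formula for $d$-invariants gives $d(S^3_{1/k}(L))=-2V_0(L)$ when $k>0$ and $d(S^3_{1/k}(L))=2V_0(\overline{L})$ when $k<0$, where $\overline{L}$ is the mirror and $V_0\ge 0$; together with $d(S^3)=0$ this yields
\[
\{d(S^3_{1/k}(L))\}_{k\in\Z}=\{\,-2V_0(L),\ 0,\ 2V_0(\overline{L})\,\}.
\]
Letting $c$ denote the core of the surgery torus in $S^3_{1/n}(J_m)$, its exterior is $S^3\setminus J_m$ and its Seifert framing agrees with that of $J_m$, so $\{(S^3_{1/n}(J_m))_{1/k}(c)\}_{k\in\Z}$ is the full family $\{S^3_{1/j}(J_m)\}_{j\in\Z}$, including the trivial filling $S^3$. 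Since $U\subset Z$ is unknotted, $Y_{1/k}(\widetilde{J})=(S^3_{1/n}(J_m))_{1/k}(c)\# Z$, and therefore
\[
\{d(Y_{1/k}(\widetilde{J}))\}_{k\in\Z}=\{\,-2V_0(J_m)+d(Z),\ d(Z),\ 2V_0(\overline{J_m})+d(Z)\,\},
\]
the middle value arising from $S^3\# Z=Z$; for the bare knot $J_m\subset S^3$ one gets instead $\{-2V_0(J_m),\,0,\,2V_0(\overline{J_m})\}$. Since $-2V_0(J_m)+d(Z)\le d(Z)\le 2V_0(\overline{J_m})+d(Z)$ and $V_0\ge 0$, the first ingredient forces $V_0(J_m)=0$ or $V_0(\overline{J_m})=0$. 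So the corollary reduces to the assertion
\[
V_0(J_m)>0\quad\text{and}\quad V_0(\overline{J_m})>0\qquad(m\ge 1),
\]
equivalently $d(S^3_{+1}(J_m))<0$ and $d(S^3_{-1}(J_m))>0$.

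Establishing these two inequalities is the main obstacle, and it is a computation. The knot Floer complex of the negative torus knot $T_{-2,6m+1}$ is explicit, so one can feed it into the cabling formulas for the invariants $V_i$ of a $(2,q)$-cable to compute $V_0(J_m)$ and $V_0(\overline{J_m})$; alternatively, $S^3_{\pm 1}(J_m)$ is a plumbed (graph) homology sphere, whose $d$-invariant is computable by the Ozsv\'ath--Szab\'o--N\'emethi algorithm. The cabling parameters $6m+1$ and $4m-1$ are chosen precisely so that both $V_0(J_m)$ and $V_0(\overline{J_m})$ come out strictly positive, even though $\tau(J_m)=-4m<0$ --- a sign which, for a thin or L-space knot, would force $V_0(J_m)=0$ --- and $\tau(\overline{J_m})=4m>0$. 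In other words, the positivity of both correction-term invariants is not detected by $\tau$ or the signature --- that is the whole point of the family $\{J_m\}$ --- and must be extracted from the cable complex (or the graph-manifold surgery description) by hand; everything else, namely the surgery formula for $d$, the identification of surgeries on $c$ and on $c\# U$ with surgeries on $J_m$, and the appeal to Theorem~\ref{thm:d-fiber}, is routine.
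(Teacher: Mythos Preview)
Your argument is correct and follows exactly the paper's route: equate the sets $\{d(Y_{1/k}(\tilde{J}))\}_{k\in\Z}$ and $\{d(\Sigma'_{1/k}(K))\}_{k\in\Z}$ via the homology-cobordism-from-concordance trick, observe that the latter has at most two elements by Theorem~\ref{thm:d-fiber}, and force a contradiction by showing the former has three. The only difference is bookkeeping: the paper packages the three-values fact as a direct citation to \cite{wu}, whereas you unwind it to the pair of inequalities $V_0(J_m)>0$ and $V_0(\overline{J_m})>0$ and leave those as a deferred computation; these are equivalent formulations of the same input.
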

\begin{proof}
Recall that if $L \subset M$ and $L' \subset M'$ are homology concordant, then $M_{1/p}(L)$ and $M'_{1/p}(L')$ are homology cobordant for all $p$.  Therefore, $\{d(M_{1/p}(L))\}_{p \in \mathbb{Z}} = \{d(M'_{1/p}(L'))\}\}_{p \in \mathbb{Z}}$ for all $p$.  It follows from \cite{wu} that $\{d(S^3_{1/p}(J_m))\}_{p \in \mathbb{Z}}$ consists of exactly three distinct values, and thus the same for $\{d(Y_{1/p}(\tilde{J}))\}_{p \in \mathbb{Z}}$, since $d(M \# M') = d(M) + d(M')$.  Theorem~\ref{thm:d-fiber} then completes the proof.
\end{proof}

The above examples were suggested to us by Marco Golla.

\begin{rmk}
Of course, there are several infinite families of non-trivial Seifert homology spheres that are homology cobordant to $S^3$ \cite{cassonharer}, so the above obstruction is not vacuous.  More artificially, for any Seifert homology sphere $\Sigma$, one can take $Z = \Sigma \# -S^3_{1/n}(J_m)$, so that $Y$ is necessarily homology cobordant to $\Sigma$.  
\end{rmk}

\begin{rmk}
All but one surgery on a fiber in a Seifert homology sphere will result in a Seifert manifold \cite{Heil}.  It is interesting to note that infinitely many surgeries on $J_m$ will be Seifert as well.  
\end{rmk}

Note that Corollary~\ref{cor:d-concordance} obstructs the singularity types of a PL disk that a Seifert fiber can bound in an acyclic four-manifold.  Recall that for any $p,q$ relatively prime, there is a Seifert fibration of $S^3$ such that the regular fibers are the $(p,q)$-torus knot $T_{p,q}$, and all such knots bound a PL disk in $B^4$.  However, only the unknot can bound a smooth disk in $B^4$, or even a topological locally flat one.  Likely known to experts, we extend this result to all Seifert fibers.   

\begin{thm}\label{thm:fiber-slice}
Let $Y$ be a Seifert fibered homology sphere and $K$ a Seifert fiber.  If $K$ is topologically slice in a topological homology ball, then $K$ is the unknot in $S^3$.  
\end{thm}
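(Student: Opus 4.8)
The plan is to combine the structure of Seifert homology spheres with two well-known facts: first, that a knot which is topologically slice in a homology ball has vanishing signatures (and more generally vanishing Tristram--Levine signatures and Casson--Gordon-type invariants), and second, that regular and singular fibers in a Seifert homology sphere are iterated torus knots whose classical invariants are highly constrained. Since $Y$ is a homology sphere, a Seifert fiber $K$ is nullhomologous, and the knot exterior has a well-defined Alexander polynomial, Seifert form, and signature function $\sigma_K\colon S^1 \to \Z$, all of which are concordance (indeed homology-concordance) invariants. If $K$ is topologically slice in a topological homology ball $W$, then the usual argument (half-lives-half-dies applied to a Seifert surface pushed into $W$) shows $\sigma_K \equiv 0$ and the Alexander polynomial factors as $f(t)f(t^{-1})$ up to units.

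First I would pin down what a Seifert fiber in $\Sigma(a_1,\dots,a_n)$ actually is as a knot. A regular fiber is a torus knot, and more generally the fiber exterior is itself Seifert fibered over a disk with (at most) $n$ exceptional fibers, so its complement is a graph manifold; in particular $K$ is an iterated torus knot (a singular fiber of order $a_i$ is, up to the relevant cabling data, built from the torus knots determined by the Seifert invariants). The key point is that such a knot is a nontrivial algebraic knot-like object: its Seifert form is nondegenerate and its signature function is not identically zero unless the knot is trivial. I would isolate this as the central lemma: a nontrivial Seifert fiber in a Seifert homology sphere has $\sigma_K(\omega) \neq 0$ for some $\omega$, equivalently it is not algebraically slice. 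This can be extracted from the fact that iterated torus knots have definite Seifert forms in an appropriate sense, or by citing the computation of signatures of torus knots and the behavior of signatures under cabling.

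The main obstacle is precisely this lemma — ruling out that some exotic combination of cabling parameters produces an algebraically slice Seifert fiber. For regular fibers this is classical (torus knots other than the unknot have nonzero signature), but a singular fiber is a cable of a torus knot with a specific (negative, after our orientation convention) framing dictated by the plumbing, and one must check that the cabling formula for the signature, or for the Alexander polynomial, never yields the slice condition. I expect the cleanest route is: (i) use the orientation convention (boundary of a negative-definite plumbing) to fix the signs of the Seifert invariants; (ii) observe that the resulting iterated torus knot is a \emph{positive} (or \emph{negative}) iterated torus knot, hence fibered with fiber surface of nonzero genus and with all Tristram--Levine signatures of one sign on a subinterval, so it cannot be algebraically slice; (iii) conclude that topological sliceness in a homology ball forces genus zero, i.e. $K$ is the unknot, and a genus-zero fiber in a Seifert homology sphere must live in $S^3$.

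Finally I would assemble: topological sliceness in a homology ball $\Rightarrow$ algebraic sliceness $\Rightarrow$ (by the lemma) $K$ is the trivial knot; and the only Seifert homology sphere containing an unknotted fiber, with trivial complement, is $S^3$ itself. One subtlety to address is that "topological homology ball" is weaker than $B^4$, but the signature/Alexander polynomial obstructions only use that $H_*(W;\Q) = H_*(B^4;\Q)$ together with the fact that $\pi_1$ of the complement of a slicing disk is normally generated by a meridian, so Freedman's theory or a direct Mayer--Vietoris argument over $\Q$ suffices; I would remark that one does not even need the full strength of topological surgery here, only the vanishing of the rational-coefficient metabolic obstruction.
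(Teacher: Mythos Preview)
Your overall strategy---show that topological sliceness in a homology ball forces a classical concordance invariant to vanish, and then check that this invariant is nonzero for every nontrivial Seifert fiber---is sound, and it is the same skeleton the paper uses.  The paper, however, runs the argument through the Fox--Milnor factorization of the Alexander polynomial rather than through signatures: using the Eisenbud--Neumann formula
\[
\Delta_K(t)=\frac{(1-t^{\alpha})^{n-2}(1-t)}{\prod_{j=1}^{n-1}\bigl(1-t^{\alpha/a_j}\bigr)},
\]
they prove that every nonzero coefficient of $\Delta_K$ equals $\pm 1$, and then observe that the middle coefficient of any product $f(t)f(t^{-1})$ is $\sum a_i^2\geq 2$ once $f$ has two nonzero coefficients.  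This avoids any discussion of the knot type of $K$ beyond its Alexander polynomial.

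The genuine gap in your proposal is the assertion that a Seifert fiber is an ``iterated torus knot'' and that cabling formulas for the signature then apply.  For $n=3$ the exterior of a singular fiber is indeed a torus knot exterior, but for $n\geq 4$ the exterior of the fiber of order $a_n$ is Seifert fibered over a disk with $n-1\geq 3$ exceptional fibers; this is \emph{not} a cable space, and $K$ is not an iterated cable of anything in the usual sense (indeed $K$ does not even live in $S^3$, so ``iterated torus knot'' is not well defined).  Consequently the positivity/definiteness arguments you sketch in step~(ii) do not apply as stated.  You correctly identify this as ``the main obstacle,'' but the proposal does not close it.

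A signature proof could likely be made to work---all roots of $\Delta_K$ are roots of unity, the knot is fibered with finite-order monodromy, and one can compute the equivariant signatures directly from the Eisenbud--Neumann splice data (see Neumann's work on signatures of graph links)---but that is a different argument from the one you outlined and requires real computation.  The paper's coefficient trick sidesteps all of this: once you know the Alexander polynomial explicitly, checking that it is not a norm is a two-line observation, whereas extracting the sign of $\sigma_K(\omega)$ from the same data is considerably more delicate.
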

Note that by \cite{Freed1}, every homology three-sphere bounds a contractible (hence acyclic) topological four-manifold.  The proof of the above theorem will be completely elementary, using an appropriate generalization of the classical Fox-Milnor theorem.  

In light of Theorem~\ref{thm:d-fiber} and Theorem~\ref{thm:fiber-slice}, it's natural to ask if some surgery along a Seifert fiber is non-trivial in $\Theta^H_3$.  This is indeed true.  
\begin{prop}
Let $Y$ be a Seifert fibered homology sphere and $K$ a Seifert fiber.  For some $n$, $Y_{1/n}(K)$ is infinite order in the homology cobordism group, unless $K$ is the unknot in $S^3$.
\label{prop:hc}
\end{prop}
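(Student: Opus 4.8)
The plan is to reduce to the case where $K$ is a regular fiber, and then to exploit the fact that large surgery on a regular fiber produces another Seifert homology sphere with a well-understood $d$-invariant. First, if $K$ is a singular fiber of order $a_i$ in $Y = \Sigma(a_1,\ldots,a_n)$, then $Y_{-1/m}(K)$ is again a Seifert homology sphere (by \cite{Heil}), obtained by altering the Seifert data; in fact one checks that for suitable $m$ the result is $\Sigma(a_1,\ldots,\hat{a_i},\ldots,a_n,b)$ for an appropriate new multiplicity $b$, so the singular-fiber case is subsumed once we understand regular fibers (and once we observe that if all the $a_i$ are removed we either stay nontrivial or $K$ was the unknot in $S^3$). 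For $K$ a regular fiber, recall that $K$ is a generalized torus knot: $Y_{1/n}(K) = \Sigma(a_1,\ldots,a_n, a_1\cdots a_n \cdot n \pm 1)$ or the like — more precisely, $\pm 1/n$-surgery on a regular fiber yields a Seifert homology sphere whose Seifert invariants are obtained by inserting an additional exceptional fiber whose order grows linearly in $n$.

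The key step is then to show that this family of Seifert homology spheres contains an element of infinite order in $\Theta^H_3$. For this I would invoke the Ozsv\'ath–Szab\'o/N\'emethi machinery: the $d$-invariant of a Seifert homology sphere bounding a negative-definite plumbing is computed from the graded roots / the Heegaard Floer homology of the plumbing, and as the new multiplicity $\to \infty$ the quantity $d(Y_{\pm 1/n}(K))$ is unbounded (for one sign of surgery), or more robustly, the reduced Floer homology grows. Actually the cleanest route: use Fintushel–Stern / Neumann–Wahl type formulas, or simply the Frøyshov/$d$-invariant, to see that $|d(Y_{-1/n}(K))|$ (again orienting appropriately) tends to infinity as $n \to \infty$ along a subsequence; since $d$ is a homomorphism $\Theta^H_3 \to 2\mathbb{Z}$, a manifold with $d \neq 0$ has infinite order, giving the claim for that $n$. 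If instead $d$ stays bounded — which can happen, e.g. if the relevant plumbing contributions cancel — one falls back on the stronger invariants: the Pin(2)-equivariant / involutive $d$-invariants $\bar{d},\underline{d}$, or the Manolescu $\alpha,\beta,\gamma$, or simply the fact (Furuta, Fintushel–Stern) that suitable infinite families of Seifert spheres like $\Sigma(a_1,\ldots,a_n,N)$ with $N$ large are independent in $\Theta^H_3$.

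The main obstacle I anticipate is handling the possibility that $d$ vanishes identically along the whole surgery family — this is exactly the scenario forced to be relevant by Theorem~\ref{thm:d-fiber}, which says $d(Y_{-1/m}(K)) = d(Y)$ for all $m \geq 0$, so the $d$-invariant alone provably cannot detect infinite order here. Thus the real content must come from a finer invariant. I would use the Frøyshov invariant or, cleanly, the fact that for a Seifert homology sphere oriented as the boundary of a negative-definite plumbing, $HF^+_{red}$ is nontrivial once the Casson invariant (equivalently, the correction to the Euler characteristic of $HF^+_{red}$) is large, and that $\chi(HF^+_{red})$, or the total rank, grows without bound along the family $Y_{-1/m}(K)$ when $K$ is nontrivial; combined with a connected-sum / linear-independence argument (à la Furuta, or Stoffregen's Pin(2) methods, or Dai–Manolescu), this produces an element of infinite order. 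The one genuinely delicate point is ruling out the degenerate case $K = $ unknot in $S^3$: here one must argue that if \emph{every} $Y_{1/n}(K)$ has finite order in $\Theta^H_3$ and $Y_{-1/m}(K)$ all share the $d$-invariant of $Y$, then the Alexander polynomial of $K$ (via the surgery formula and Theorem~\ref{thm:fiber-slice}'s Fox–Milnor input) must be trivial and $Y = S^3$, so $K$ is unknotted — I would make this precise by noting that a nontrivial Seifert fiber has nontrivial Alexander polynomial, forcing genuine growth in the Floer homology of surgeries.
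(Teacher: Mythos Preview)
Your proposal has a genuine gap: you correctly recognize that the $d$-invariant alone cannot work (by Theorem~\ref{thm:d-fiber} it is constant along negative surgeries), but none of your fallback suggestions actually closes. Growth of $\mathrm{rank}\,HF^+_{red}$ or of the Casson invariant along the surgery family does not by itself obstruct finite order in $\Theta^H_3$; the Furuta/Stoffregen/Dai--Manolescu machinery you invoke does not take ``large reduced Floer homology'' as input and output ``infinite order''. The final paragraph, appealing to the Alexander polynomial and Fox--Milnor, conflates sliceness obstructions with homology cobordism obstructions and does not lead anywhere. And the opening reduction from singular to regular fibers is both unnecessary and not quite correct as stated.

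The missing idea is much more elementary than anything you propose. The paper uses a classical result of Neumann--Zagier: a Seifert homology sphere with central Seifert invariant $e > 1$ has infinite order in $\Theta^H_3$. The proof then comes down to a direct computation (Lemma~\ref{lem:cross}) showing that if $Y = \Sigma(a_1,\ldots,a_n)$ has central invariant $e$, then a suitable $1/m$-surgery on the fiber of order $a_n$ yields (up to orientation) a Seifert homology sphere with central invariant $n - e$. Since $n \geq 3$, at least one of $e$ and $n-e$ exceeds $1$, so either $Y$ itself or this surgered manifold has infinite order. The case $Y = S^3$ with $K$ a nontrivial torus knot is handled separately: $+1$-surgery gives $-\Sigma(p,q,pq-1)$, infinite order by Furuta. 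No Floer-theoretic input beyond this is needed.
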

The above therefore provides an alternate proof that Seifert fibers (other than the unknot in $S^3$) are not {\em smoothly} slice in any homology ball.   Indeed, if $K$ were slice in some homology ball, then $Y_{1/n}(K)$ would bound a homology ball for all $n$. 

\begin{rmk}
In many instances $+1$-surgery on a Seifert fiber will change the Rokhlin invariant, and thus either $Y$ or $Y_1(K)$ will be infinite-order in the homology cobordism group, since the Neumann-Siebenmann invariant $\bar{\mu}$ will be non-zero \cite{savff}.  However, this is not the case for surgery along all Seifert fibers, such as the order 7 fiber in $\Sigma(3,5,7)$.  
\end{rmk}

\section*{Organization} In Section \ref{sec:sf}, we review Seifert fibered integral homology spheres and set notation and conventions.  The lemmas in Section \ref{sec:sf-surgery} describe how performing surgery on a fiber affects the Seifert invariants and in turn the plumbing graph.  There we also prove Proposition~\ref{prop:hc}.  In Section \ref{sec:d} we prove Theorem \ref{thm:d-fiber}.  Finally, we recall the Alexander polynomials of Seifert fibers from \cite{eisneum} and prove Theorem \ref{thm:fiber-slice} in Section \ref{sec:ap}.  

\section*{Acknowledgements} We thank Margaret Doig, Stefan Friedl, Marco Golla, Josh Greene, Andr\'as N\'emethi, Mark Powell, Danny Ruberman, and Nikolai Saveliev for helpful comments.  Additionally, we thank Chuck Livingston for his helpful feedback on an earlier draft of this paper.  We want to draw special attention to Cagri Karakurt who played an integral role in starting this project, and was part of the proof of Theorem~\ref{thm:d-fiber}.

\section{Seifert fibered integral homology spheres}\label{sec:sf}
We recall some basic notions about Seifert fibered integral homology spheres and establish notation.  Consider $n$ pairs of relatively prime integers $(a_k, b_k)$ with $n \geq 3$, and an additional integer $e$.  Recall that the Seifert fibered space with base orbifold $S^2$ and Seifert invariants $e, (a_1,b_1), \ldots, (a_n,b_n)$ is the closed 3-manifold $Y = \Sigma(a_1, \ldots, a_n)$ constructed by starting with an $S^1$-bundle over an $n$-punctured $S^2$ of Euler number $e$ and filling the $k$th boundary component with an $a_k/b_k$-framed solid torus.  We refer to $e$ as the \emph{central Seifert invariant.}  Note that $Y$ inherits an $S^1$-action, the orbits of which are called fibers.  

The resulting manifold is an integral homology sphere if and only if
$$ a_1 \ldots a_n \left( \sum_{j = 1}^n \frac{b_j}{a_j} - e \right) = \pm 1.$$
In the present paper, we focus entirely on this case, and thus do not discuss other base orbifolds.  Choosing a sign for the right hand side amounts to fixing an orientation on $Y$, and we will always work with the orientation corresponding to $-1$.  Therefore, 
\begin{equation}
a_1 \ldots a_n \sum_{j = 1}^n \frac{b_j}{a_j} = -1 + a_1 \ldots a_n e.
\label{eq:inv}
\end{equation}

For each $j$, reducing the above equation modulo $a_j$ yields
\begin{equation}
\frac{a_1 \cdot \ldots \cdot a_n b_j}{a_j} \equiv -1 \pmod{a_j}.
\label{eq:mod}
\end{equation}

These equivalences imply that the integers $a_1, \ldots, a_n$ are pairwise relatively prime, and they furthermore completely determine the residue of $b_j$ modulo $a_j$ for each $j$.

Here we will take the convention that $1 \leq b_j < a_j$ when $a_j > 1$ and $b_j = 0$ when $a_j = 1$.  For each $k$ with $a_k \geq 2$, we refer to the core circle of the $\nicefrac{a_k}{b_k}$ Dehn filling as a \emph{singular fiber}, and refer to all other fibers as \emph{regular fibers}.  Lemma \ref{lem:cross} and Proposition \ref{prop:inv} below will describe how surgery on a fiber affects the Seifert invariants.  For notational convenience, we always choose the fiber of order $a_n$.  This fiber is regular if and only if $a_n =1$, and we will require that $a_k > 1$ for $k < n$. It is important to notice that if a Seifert fibered homology sphere $Y$ has fewer than three singular fibers, then $\Sigma \cong S^3$.  Furthermore, $\Sigma(a_1, \ldots, a_n, 1) \cong \Sigma(a_1, \ldots, a_n)$.  Finally, since any fiber in $\Sigma(1) = S^3$ is unknotted, we are easily able to omit the case of $n = 1$ for the rest of the paper.

Given a Seifert fibered integral homology sphere $Y =\Sigma(a_1, \ldots, a_n)$ with Seifert invariants $e, (a_1,b_1), \ldots, (a_n, b_n)$, consider for each $k$ with $a_k \geq 2$, the continued fraction expansion
$$ \frac{a_k}{b_k} = [x_{k,1},  \ldots, x_{k, m_k}] := x_{k,1} - \frac{1}{ x_{k,2} - \frac{1}{ \ldots - \frac{1}{x_{k, m_k} } } },$$
where each $x_{k,i}$ is at least $2$.

The \emph{plumbing graph} for $Y$ is the star-shaped weighted graph $\Gamma(Y)$ consisting of $n$ chains of vertices emanating from a central vertex with weight $-e$.  The $k$th chain consists of $m_k$ vertices carrying weights $-x_{k,1}, \ldots, -x_{k,m_k}$, numbered from the center outward. If $a_n = 1$, there is no $n$th chain.  This graph induces a negative-definite plumbed four-manifold $P(\Gamma(Y))$.  By replacing the weighted vertices in the graph with framed unknots (linked exactly when they share an edge), one obtains a Kirby diagram for $P(\Gamma(Y))$ and a Dehn surgery diagram for $\partial P(\Gamma(Y)) \cong Y$.  See Figure \ref{fig:dehn}.

\begin{figure}
\labellist
\pinlabel $-e$ at 213 207
\pinlabel $-x_{1,1}$ at 108 187
\pinlabel $-x_{1,2}$ at 80 170
\pinlabel $-x_{1,m_1}$ at 22 128
\pinlabel $-x_{2,1}$ at 184 158
\pinlabel $-x_{2,2}$ at 133 109
\pinlabel $-x_{2,m_2}$ at 100 44
\pinlabel $-x_{3,1}$ at 234 157
\pinlabel $-x_{3,2}$ at 220 100
\pinlabel $-x_{3,m_3}$ at 245 25
\pinlabel $-x_{n,1}$ at 300 189
\pinlabel $-x_{n,2}$ at 340 169
\pinlabel $-x_{n,m_n}$ at 405 125
\endlabellist
\includegraphics[width = 0.75\linewidth]{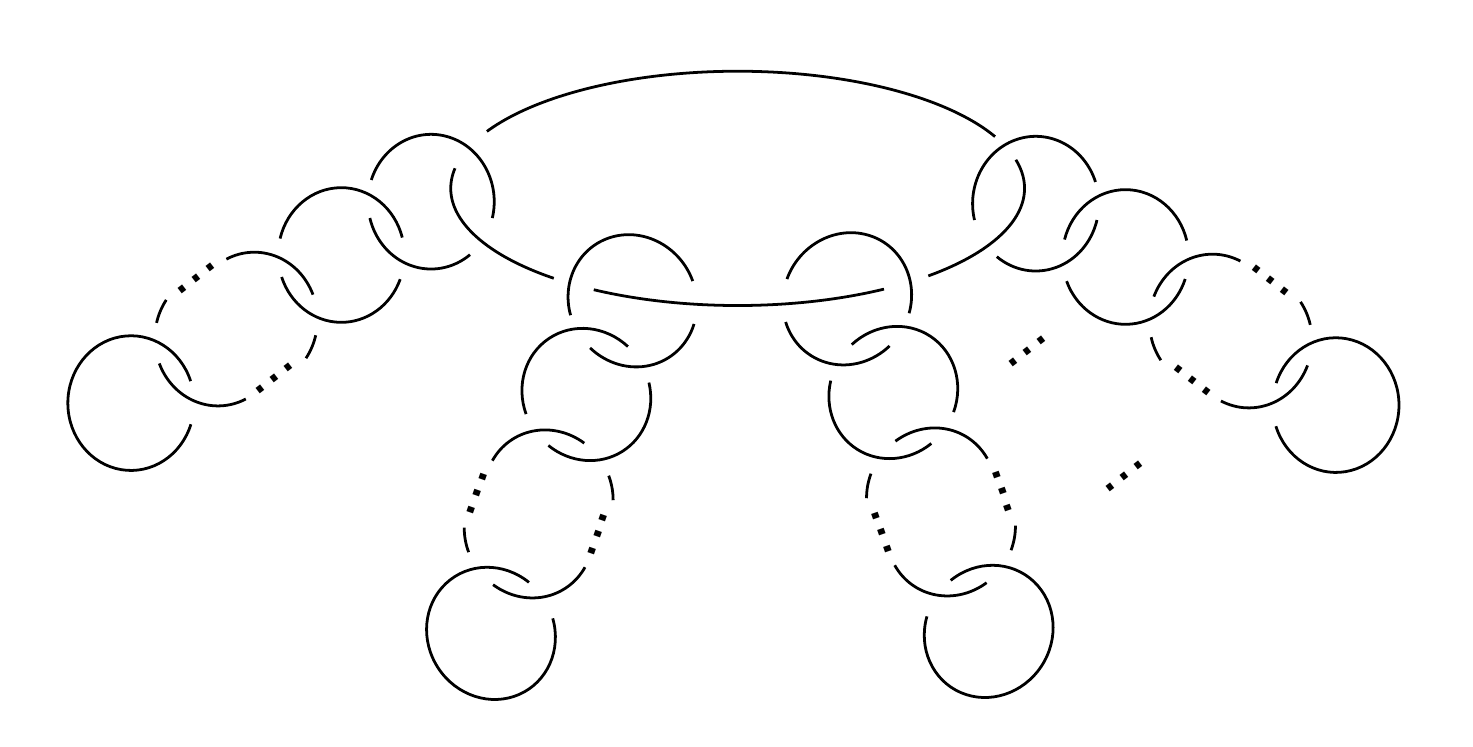}
\label{fig:dehn}
\caption{A Dehn surgery diagram for the Seifert homology sphere $\Sigma(a_1,\ldots,a_n)$ with Seifert invariants $e, (a_1,b_1),\ldots,(a_n,b_n)$ where $\nicefrac{a_k}{b_k} = [x_{k,1},\ldots,x_{k,m_k}]$.}
\label{fig:dehn}
\end{figure}

\section{The effects of surgeries on Seifert invariants}\label{sec:sf-surgery}

Given a Seifert-fibered homology sphere $\Sigma(a_1, \ldots, a_n)$, we remind the reader that we require $a_k \geq 2$ for $1 \leq k \leq n-1$.  We allow $a_n = 1$, in which case require $b_n = 0$ and the fiber associated to $a_n$ is in a regular fiber.  We establish the following notation, which will be used throughout the rest of the paper:
$$ \alpha:= \prod_{k=1}^{n-1} a_k \quad \text{and} \quad \beta:= \frac{\alpha b_n + 1}{a_n}.$$ 
Note that due to \eqref{eq:mod}, $\beta$ is an integer.
In this section, we prove Proposition~\ref{prop:hc}, by studying the behavior of Seifert invariants under surgery along fibers.  The proof of Proposition \ref{prop:hc} will rely on the following:
\begin{thm}[\cite{NeumannZagier}]
Let $Y$ be a Seifert-fibered homology sphere with Seifert invariants $e$, $(a_1, b_1), \ldots, (a_n, b_n)$.   If $e > 1$, then the class of $Y$ has infinite order in the integral homology cobordism group.
\label{thm:hc}
\end{thm}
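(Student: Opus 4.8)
The plan is to deduce infinite order from the existence of a homomorphism out of $\Theta^H_3$ to a torsion-free group under which $Y$ has nonzero image. The natural choice is the correction-term homomorphism $d\colon \Theta^H_3 \to 2\mathbb{Z}$ recalled in the introduction: since the target is torsion-free, once we know $d(Y)\neq 0$ we automatically get $d(kY)=k\,d(Y)\neq 0$ for every nonzero integer $k$, so no nonzero multiple of $Y$ bounds a homology ball and $[Y]$ has infinite order. The theorem thus reduces to the single assertion that $d(Y)\neq 0$ under the hypothesis $e>1$.

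To pin down the sign of $d(Y)$ I would use the orientation hypothesis, namely that $Y=\partial P(\Gamma(Y))$ with $P$ negative definite. The Ozsv\'ath--Szab\'o negative-definite inequality gives $K^2+\mathrm{rk}(L)\le 4\,d(Y)$ for every characteristic covector $K$ of the plumbing lattice $L=H_2(P)$; in particular $d(Y)\ge 0$, and it suffices to exhibit a characteristic $K$ with $K^2>-\mathrm{rk}(L)$. When the plumbing lattice is even (all weights even) one may take $K=0$ and conclude $d(Y)\ge \mathrm{rk}(L)/4>0$ immediately; the genuine content is the odd case, where one must produce a sufficiently short characteristic covector.

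The main obstacle is precisely this lattice-theoretic statement: that a star-shaped negative-definite plumbing whose central weight is $-e$ with $e\ge 2$ admits a characteristic covector of square exceeding $-\mathrm{rk}(L)$, equivalently that $d(Y)>0$ in this range. I would attack it through the Ozsv\'ath--Szab\'o and N\'emethi description of $HF^{+}$ of plumbed three-manifolds: every leaf and chain vertex has weight no greater than minus its degree, so the central vertex is the only possible bad vertex and the combinatorial algorithm (the associated graded root, or the minimum over lattice points of the relevant $\tau$-function built from the weights) computes $d(Y)$. Raising the central weight to $-e\le -2$ should force this minimum into the regime where $d$ is strictly positive; making this precise amounts to tracking monotonicity of the $\tau$-function in the central weight, together with the arithmetic identity $e=\lfloor\sum_j b_j/a_j\rfloor+1$ that follows from \eqref{eq:inv}. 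Translating the clean hypothesis $e\ge 2$, equivalently $\sum_j b_j/a_j>1$, into strict positivity of this lattice quantity is where essentially all of the work lives, and is the Heegaard Floer counterpart of the Dedekind-sum computations underlying the cited argument of Neumann and Zagier.

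Finally, I would note an alternative route that stays closer to the cited source: run the same two-step outline with Fr\o yshov's instanton $h$-invariant (a homomorphism $\Theta^H_3\to\mathbb{Z}$) or the Fintushel--Stern $R$-invariant in place of $d$. Each is again a homology-cobordism obstruction computed directly from the Seifert data by a lattice-point/Dedekind-sum formula, and $e>1$ is exactly the regime in which it is nonzero. In either approach the topological input is the formal additivity of the invariant under connected sum; the real task is the explicit evaluation certifying nonvanishing when $e>1$.
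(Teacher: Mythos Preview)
The paper does not prove this theorem; it is quoted from Neumann--Zagier and used as a black box. So there is no ``paper's own proof'' to compare against, only the original source, which predates Heegaard Floer homology and proceeds via the Fintushel--Stern $R$-invariant and its explicit Dedekind-sum formula.

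Your primary route has a genuine gap. The reduction ``infinite order $\Leftarrow d(Y)\neq 0$'' is fine, but the implication ``$e>1 \Rightarrow d(Y)>0$'' that you call ``the single assertion'' is not established, and in fact the paper explicitly flags it as open: in the remark following Proposition~\ref{prop:d1} the authors ask whether a Seifert homology sphere with central Seifert invariant different from $1$ must have nonvanishing $d$-invariant, and note that Lecuona--Lisca only settle the extremal case $e=n-1$. Your sketch of how to attack it (monotonicity of the $\tau$-function in the central weight, graded-root combinatorics) is plausible heuristics but not a proof; the honest status is that your proposed argument would, if completed, resolve a question the paper leaves open rather than reproduce a known proof.

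Your closing paragraph is closer to the truth: the Neumann--Zagier argument does go through a gauge-theoretic invariant (the Fintushel--Stern $R$), and the content is exactly the Dedekind-sum computation showing $R>0$ when $e>1$. One caution: $R$ is not literally a homomorphism on $\Theta^H_3$, so the ``nonzero image under a homomorphism'' template does not apply verbatim; the infinite-order conclusion comes instead from showing $R(kY)>0$ for all $k\ge 1$ directly (the positivity persists under connected sum because the relevant count is additive), which is what Neumann--Zagier actually do.
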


We will prove Proposition~\ref{prop:hc} by showing that there exists a surgery on a fiber which results in a Seifert homology sphere with $e > 1$.  Therefore, we must describe the result of performing $\nicefrac{1}{m}$-surgery on a fiber in a Seifert fibered integral homology sphere, which will also be  used in the proof of Theorem~\ref{thm:d-fiber}.  This is understood according to the following well-known formula:
\begin{lem}
Let $Y= \Sigma \left( a_1, \ldots, a_n\right)$ be a Seifert fibered integral homology sphere with $n \geq 3$.  Choose $K \subset Y$ to be the fiber of order $a_n$, and fix an integer $m$.  Then
$$Y_{1/m}(K) \cong \sigma \Sigma \left( a_1, \ldots, a_{n-1}, \abs{a_n - m \alpha}\right),$$
where $\sigma$ denotes the sign of the nonzero integer $a_n  - m \alpha$.  The core of the surgery is the fiber of order $a_n - m \alpha$.  
\label{lem:seif}
\end{lem}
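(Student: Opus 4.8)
The plan is to compute the result of $\nicefrac{1}{m}$-surgery on the order-$a_n$ fiber directly from a surgery diagram, tracking how the Seifert invariants transform. Recall from Figure~\ref{fig:dehn} that $Y$ has a Dehn surgery description as a star-shaped plumbing, and the fiber $K$ of order $a_n$ is isotopic to a meridian of the central vertex that is pushed off through the $n$th chain --- more precisely, it is a regular fiber of the circle action sitting in a neighborhood of the exceptional fiber of order $a_n$. The cleanest route is to give a surgery diagram for $(Y, K)$ in which $K$ appears as an unknotted circle that is a section-like curve: one represents $K$ as a meridian of the central unknot (so that $0$-surgery on $K$ would introduce no change), and then performing $\nicefrac{1}{m}$-surgery on $K$ amounts to a Rolfsen twist.

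First I would recall the standard fact that in the plumbing/surgery picture, the order-$a_n$ fiber $K$ can be taken to be an unknot that links the central vertex once and links nothing else (equivalently, $K$ is a fiber lying over a generic point, pushed into the $n$th solid torus piece). Then I would perform a Rolfsen twist: $\nicefrac{1}{m}$-surgery on such an unknot $K$ adds $-m$ (or $+m$, depending on orientation conventions) to the framing of the central vertex while leaving the rest of the diagram unchanged. This changes the central Seifert invariant from $e$ to $e + m$ (up to sign convention), or equivalently --- and this is the key bookkeeping step --- it changes the $n$th Seifert pair. Concretely, one checks that replacing $(a_n, b_n)$ by a new pair encoding $\nicefrac{1}{(a_n/b_n) - m\alpha'}$-type data, combined with the homology-sphere normalization \eqref{eq:inv}, forces the new order of the $n$th fiber to be $|a_n - m\alpha|$, with the new $b$-value again pinned down mod $|a_n - m\alpha|$ by \eqref{eq:mod}. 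The sign $\sigma$ enters because Seifert invariants $(a,b)$ require $a > 0$; when $a_n - m\alpha < 0$ one must reverse orientation to restore positivity, which globally reverses the orientation of the manifold, hence the factor $\sigma$ in front of $\Sigma(\ldots)$. Finally, the core of the $\nicefrac{1}{m}$-surgery is by construction the new fiber sitting in the newly-filled solid torus, so it is the fiber of order $a_n - m\alpha$ (or $|a_n - m\alpha|$), establishing the last sentence.

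I would organize the write-up as: (1) produce the surgery diagram exhibiting $K$ as a meridian of the central vertex; (2) apply the Rolfsen twist to read off the new plumbing graph; (3) re-expand continued fractions and invoke \eqref{eq:inv}--\eqref{eq:mod} to identify the new Seifert invariants $e'$, $(a_n', b_n')$ with $a_n' = |a_n - m\alpha|$; (4) deal with signs/orientations to produce $\sigma$; (5) observe the core of surgery is the new exceptional fiber. Because $n \geq 3$ and $a_k \geq 2$ for $k < n$, we have $\alpha \geq 2$, so $a_n - m\alpha \neq 0$ for all $m \neq 0$ (and trivially for $m = 0$ unless $a_n = 0$, which never happens), justifying that $\sigma$ is well-defined.

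The main obstacle I expect is the sign and orientation bookkeeping in steps (3)--(4): getting the continued-fraction arithmetic to line up with the normalization $1 \le b_j < a_j$, and correctly tracking when the resulting Seifert invariant becomes negative so that an orientation reversal (and hence the $\sigma$) is needed. The topological input (Rolfsen twist on a meridian of the central vertex) is classical and clean; it is the careful matching of conventions --- our orientation convention giving $-1$ in \eqref{eq:inv}, and the plumbing-graph weights $-e, -x_{k,i}$ --- that requires attention. An alternative, perhaps more robust, approach would be to work entirely on the level of Seifert invariants: note that $\nicefrac{1}{m}$-surgery on a fiber of order $a_n$ in the $S^1$-bundle picture simply replaces the filling slope $a_n/b_n$ by $a_n/(b_n - m a_n \cdot (\text{something}))$, then renormalize; but this still ultimately requires the same homology-sphere constraint computation, so the bottleneck is unchanged.
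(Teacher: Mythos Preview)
The paper does not actually prove this lemma; it is stated as a ``well-known formula'' and used without argument. So there is no proof to compare against, and your outline is more than the paper provides. That said, your sketch contains a genuine error that would cause it to fail as written.

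Your identification of $K$ is wrong when $a_n > 1$. You describe the order-$a_n$ fiber as ``a regular fiber of the circle action sitting in a neighborhood of the exceptional fiber'' and as ``an unknot that links the central vertex once and links nothing else.'' But the fiber of order $a_n$ \emph{is} the exceptional fiber itself, not a nearby regular fiber, and these are different knots in $Y$. In the plumbing diagram of Figure~\ref{fig:dehn}, the singular fiber of order $a_n$ is a meridian of the \emph{outermost} vertex on the $n$th arm, not of the central vertex --- the paper itself uses exactly this fact in the proof of Proposition~\ref{prop:d1}. A meridian of the central vertex is a regular fiber, which is only $K$ in the degenerate case $a_n = 1$. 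If you carry out your Rolfsen twist on a meridian of the central vertex with framing $1/m$ you will change the central weight from $-e$ to $-e-m$ and obtain a manifold that is not even a homology sphere in general, so the computation visibly cannot produce $\Sigma(a_1,\ldots,a_{n-1},|a_n-m\alpha|)$.

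Even after correcting the location of $K$, there is a second subtlety you underestimate: when $K$ is drawn as a meridian of a surgery component in the ambient $S^3$ diagram, its $S^3$ Seifert framing does \emph{not} agree with its nullhomologous longitude in $Y$. Thus ``$1/m$-surgery on $K$ in $Y$'' is not literally ``add $K$ with coefficient $1/m$ to the diagram and Rolfsen-twist''; there is an integer framing shift to compute first. This is the step where the quantity $\alpha$ enters, and it is the real content of the lemma, not merely sign bookkeeping. A cleaner route is the one you allude to at the end: work directly with the Seifert-fibered exterior $Y \setminus N(K)$ over the disk, express $(\mu,\lambda)$ in fiber--section coordinates using the homology-sphere constraint, and read off the new multiplicity from the filling slope $\mu + m\lambda$.
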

%

In particular, if $\Sigma(a_1, \ldots, a_n)$ has the property that $a_n < \alpha$, then the result of $+1$-surgery on the fiber associated to $a_n$ is $-\Sigma(a_1, \ldots, a_{n-1}, \alpha - a_n)$.  Note that this results in a Seifert fibered integral homology sphere with the opposite of our usual orientation.

\begin{lem}\label{lem:cross}
Let $\Sigma(a_1, \ldots, a_n)$ be a Seifert fibered integral homology sphere with Seifert invariants $e,  (a_1,b_1), \ldots, (a_n, b_n)$.

If $a_n < \alpha$, then the Seifert fibered integral homology sphere $\Sigma(a_1, \ldots, a_{n-1}, \alpha-a_n)$ has Seifert invariants given by
$$n-e,  (a_1,a_1-b_1), \ldots, (a_{n-1},a_{n-1}-b_{n-1}), (\alpha - a_n, b_n - \beta + \alpha - a_n).$$
\end{lem}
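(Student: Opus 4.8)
The plan is to verify directly that the list
$$\bigl(n-e,\ (a_1,\,a_1-b_1),\ \ldots,\ (a_{n-1},\,a_{n-1}-b_{n-1}),\ (\alpha-a_n,\ b_n-\beta+\alpha-a_n)\bigr)$$
really is a list of Seifert invariants of a Seifert homology sphere whose orders are $a_1,\dots,a_{n-1},\alpha-a_n$ and whose orientation is the one fixed by the sign convention in \eqref{eq:inv}; since, by the conventions of Section~\ref{sec:sf}, such a manifold is unique up to orientation-preserving diffeomorphism, it is then forced to be $\Sigma(a_1,\dots,a_{n-1},\alpha-a_n)$. Before that I would record the elementary arithmetic: the hypothesis $a_n<\alpha$ makes $\alpha-a_n$ a positive integer; for $k<n$ we have $\gcd(\alpha-a_n,a_k)=\gcd(a_n,a_k)=1$ (as $a_k\mid\alpha$), so the new orders are pairwise relatively prime; writing $\alpha':=\prod_{k=1}^{n-1}a_k=\alpha$, the product of the new orders is $\alpha(\alpha-a_n)$; and the definition of $\beta$ gives the identity $a_n\beta=\alpha b_n+1$, which is the only nontrivial input about $b_n$ that will be needed.

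The main step is to check that the proposed list satisfies \eqref{eq:inv} with the correct sign, i.e.
$$\alpha(\alpha-a_n)\!\left(\sum_{j=1}^{n-1}\frac{a_j-b_j}{a_j}+\frac{b_n-\beta+\alpha-a_n}{\alpha-a_n}\right)=-1+\alpha(\alpha-a_n)(n-e).$$
I would expand the left-hand side using $\frac{a_j-b_j}{a_j}=1-\frac{b_j}{a_j}$ and the fact that the original relation \eqref{eq:inv} can be rewritten as $\sum_{j=1}^{n}\frac{b_j}{a_j}=e-\frac{1}{\alpha a_n}$, which lets me eliminate $\sum_{j=1}^{n-1}\frac{b_j}{a_j}$ in favor of $e$, $\frac{1}{\alpha a_n}$, and $\frac{b_n}{a_n}$. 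After that substitution the terms not proportional to $(n-e)$ involve only $\alpha$, $a_n$, $b_n$, and $\beta$; the copies of $\frac{\alpha}{a_n}$ cancel in pairs, $\frac{\alpha^2 b_n}{a_n}$ is rewritten via $a_n\beta=\alpha b_n+1$, and everything collapses to exactly $-1$, giving the claimed equality. I expect this bookkeeping — in particular keeping straight which expressions are integers and which are rationals — to be the only real obstacle; there is no conceptual difficulty.

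Finally, once \eqref{eq:inv} holds for the new list, reducing it modulo each new order yields \eqref{eq:mod} for that list (exactly as in Section~\ref{sec:sf}); in particular each new $b$-value is a unit modulo the corresponding order, so the list is a genuine — though not necessarily the normalized — list of Seifert invariants, and its orders and orientation identify the underlying manifold as $\Sigma(a_1,\dots,a_{n-1},\alpha-a_n)$. As a sanity check one can instead verify \eqref{eq:mod} for the new list by hand: for $j<n$ one uses $a_j-b_j\equiv-b_j$ and $\alpha-a_n\equiv-a_n\pmod{a_j}$ to reduce to \eqref{eq:mod} for $\Sigma(a_1,\dots,a_n)$, and for $j=n$ one uses $\alpha\equiv a_n\pmod{\alpha-a_n}$ together with $a_n\beta=\alpha b_n+1$. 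It is worth flagging that the $b$-values in the statement — notably $b_n-\beta+\alpha-a_n$ — need not lie in the range $[1,a'_j)$ of the normalization convention; they only encode the correct residues, which is all that is required.
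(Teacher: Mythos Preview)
Your verification of \eqref{eq:inv} for the proposed list is correct and is essentially the same computation the paper performs (the paper packages it as showing that a certain combined expression vanishes; you expand directly --- these are interchangeable). The point where your proposal falls short is exactly the flag at the end. You assert that the displayed $b$-values ``need not lie in the range $[1,a'_j)$'' and that the correct residues are ``all that is required.'' Both claims are wrong in this setting. The paper fixes the convention $1\le b_j<a_j$ (and $b_j=0$ when $a_j=1$), so the lemma is asserting that the \emph{normalized} Seifert invariants of $\Sigma(a_1,\dots,a_{n-1},\alpha-a_n)$ are precisely the displayed list --- in particular that the normalized central invariant is $n-e$. This is not a cosmetic point: the sole application of the lemma is to feed the value $n-e$ into the Neumann--Zagier criterion (Theorem~\ref{thm:hc}), and an unnormalized presentation of the same manifold can have a central value differing from $n-e$ by any integer.

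Accordingly, the paper devotes the first half of its proof to checking that $b_n':=b_n-\beta+\alpha-a_n$ actually satisfies $0<b_n'<\alpha-a_n$ when $\alpha-a_n>1$, and $b_n'=0$ when $\alpha-a_n=1$; the inequalities $1\le a_j-b_j<a_j$ for $j<n$ are immediate from $1\le b_j<a_j$. These bounds are easy (a few lines with the identity $a_n\beta=\alpha b_n+1$ you already isolated), but they are a genuine part of what the lemma claims. To complete your argument you need to add this verification rather than dismiss it.
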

\begin{proof}
Note that $\beta$ is an integer by \eqref{eq:mod}.  We first claim that $b_n' := b_n - \beta + \alpha - a_n$ satisfies the conventions described in Section \ref{sec:sf}.
If $\alpha - a_n > 1$, then
$$\beta = \frac{\alpha b_n + 1}{a_n} > \frac{a_n b_n+1}{a_n} > b_n \quad \text{and so} \quad b_n' < \alpha - a_n.$$
Furthermore,
\begin{align*}
b_n' = b_n - \beta + \alpha - a_n &= \frac{a_n b_n - \beta a_n + \alpha a_n - a_n^2}{a_n}
=\frac{a_n b_n - \alpha b_n - 1 + \alpha a_n - a_n^2}{a_n}\\
&=\frac{(a_n - b_n)(\alpha - a_n) - 1}{a_n} \geq \frac{2(a_n - b_n) - 1}{a_n} > 0.
\end{align*}
Therefore, $0 < b_n' < \alpha-a_n$ in this case.  On the other hand, if $\alpha - a_n = 1$, then we claim that $b_n' = 0$.  First notice that
$$ \alpha (a_n - 1) \equiv -\alpha \equiv -1 \pmod{a_n} \quad \text{and thus} \quad b_n = a_n - 1.$$
Indeed, it follows that
$$\beta = \frac{\alpha b_n + 1}{a_n} = b_n + \frac{b_n+1}{a_n} = b_n + 1 \quad \text{and so} \quad \ b_n' = b_n - \beta + \alpha - a_n = -1+1 = 0.$$

Now it suffices to prove that 
\begin{equation}\label{eq:crosszero}
\alpha(\alpha - a_n) \left( \sum^{n-1}_{j=1} \frac{a_j - b_j}{a_j} + \frac{b_n - \beta + \alpha - a_n}{\alpha-a_n} - (n-e) \right) = -1.  
\end{equation}
First, by reversing orientation on $\Sigma(a_1,\ldots,\alpha - a_n)$, observe that \eqref{eq:crosszero} holds if and only if 
\begin{equation*}\label{eq:crosszero-neg}
\alpha(\alpha - a_n) \left( \sum^{n-1}_{j=1} \frac{b_j}{a_j} + \frac{\beta - b_n}{\alpha-a_n} - e \right) = +1.
\end{equation*}

Therefore, we will show that
\begin{equation}\label{eq:crosszero-combine}
\alpha a_n \left(\sum^n_{j=1} \frac{b_j}{a_j} - e\right) + \alpha(\alpha - a_n) \left( \sum^{n-1}_{j=1} \frac{b_j}{a_j} + \frac{\beta - b_n}{\alpha - a_n} -e \right) = 0,
\end{equation}
which will establish the result.  Rearranging the left-hand side of \eqref{eq:crosszero-combine}, we obtain
\begin{equation}\label{eq:crosszero-simplified}
\alpha a_n \left(\frac{b_n}{a_n} - \frac{\beta - b_n}{\alpha - a_n}\right ) + \alpha^2\left (\sum^{n-1}_{j=1} \frac{b_j}{a_j} - e\right) + \alpha^2 \frac{\beta - b_n}{\alpha - a_n}.
\end{equation}
Note that the first term is precisely $-\frac{\alpha}{\alpha - a_n}$, while the second term is $-\alpha \beta$.  For the latter claim, simply use that
$$
\alpha\left (\sum^{n-1}_{j=1} \frac{b_j}{a_j} - e \right) = -\frac{\alpha b_n}{a_n} - \frac{1}{a_n}.
$$  
It is now straightforward to show that the sum in \eqref{eq:crosszero-simplified} is equal to zero.  
\end{proof}

\begin{proof}[Proof of Proposition \ref{prop:hc}]

Note that the conclusion of the proposition is independent of the orientation, so we choose $Y$ to have the orientation as given in Section~\ref{sec:sf}.  We first address the case in which $Y= S^3$ and $K$ is a fiber other than the unknot.  In this case, $Y_{+1}(K) = -\Sigma(p,q,pq-1)$ for some $p, q \geq 2$.  Each of these manifolds is infinite order in the homology cobordism group \cite{Fur}.  

Therefore, we now assume that there are at least three singular fibers of order at least two and let $K$ denote the fiber of order $a_n$.  Let $a_n = q\alpha + r$ for integers $q,r$ with $q \geq 0$ and $0 < r < \alpha$.  Then Lemma \ref{lem:seif} implies that
$$ Y_{1/(q+1)}(K) \cong - \Sigma(a_1, \ldots, a_{n-1}, \alpha - r).$$
According to Lemma \ref{lem:cross}, this manifold has central Seifert invariant equal to $n - e$.  The proposition follows now from Theorem \ref{thm:hc} above, as at least one of $e$ and $n-e$ is greater than 1.
\end{proof}

In the above, we did not need to understand the behavior of the Seifert invariants for all $1/m$-surgeries on a fiber.  However, for completeness, we give the general description of this below.  The proof is similar to that of Lemma~\ref{lem:cross}.    

\begin{prop}
Let $Y= \Sigma(a_1,\ldots,a_n)$ be a Seifert fibered integral homology sphere with Seifert invariants $e, (a_1,b_1), \ldots, (a_n,b_n)$, $n \geq 3$.  Let $K$ denote the fiber of order $a_n$.  Then, the following hold for any $m \in \Z$:
\begin{enumerate}
\item If $a_n \geq  m \alpha$, then $Y_{1/m}(K)$ has Seifert invariants given by 
$$
e, (a_1,b_1), \ldots, (a_{n-1}, b_{n-1}), (a_n - m\alpha, b_n - m\beta).  
$$
\item If $a_n < m \alpha$, then $-Y_{1/m}(K)$ has Seifert invariants given by 
$$
n-e, (a_1,a_1-b_1),\ldots, (a_{n-1}, a_{n-1}-b_{n-1}), (m\alpha - a_n, m(\alpha - \beta) + b_n - a_n).  
$$
\end{enumerate}
\label{prop:inv}
\end{prop}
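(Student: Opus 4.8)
The plan is to mimic the proof of Lemma~\ref{lem:cross}, which handles the special case $m=1$ (combined with Lemma~\ref{lem:seif} giving the underlying manifold). The input is Lemma~\ref{lem:seif}: $Y_{1/m}(K) \cong \sigma\,\Sigma(a_1,\ldots,a_{n-1},|a_n - m\alpha|)$ with $\sigma$ the sign of $a_n - m\alpha$, and the core is the fiber of order $a_n - m\alpha$. So the manifold is already identified; what remains is to pin down the Seifert invariants, i.e.\ the central invariant and the last pair $(a_n - m\alpha, \cdot)$ (the first $n-1$ pairs being inherited, up to the $b_j \mapsto a_j - b_j$ flip in the orientation-reversed case). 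For case (1), where $a_n \geq m\alpha$, the orientation is preserved and I claim the candidate data is $e$, the unchanged pairs $(a_j,b_j)$ for $j<n$, and $(a_n - m\alpha,\ b_n - m\beta)$. For case (2), where $a_n < m\alpha$, I reverse orientation and claim the data $n - e$, the flipped pairs $(a_j, a_j - b_j)$, and $(m\alpha - a_n,\ m(\alpha - \beta) + b_n - a_n)$.

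The verification splits into two pieces, exactly as in Lemma~\ref{lem:cross}. First, I check the proposed last-coordinate $b_n'$ satisfies the normalization convention of Section~\ref{sec:sf}: either $0 < b_n' < |a_n - m\alpha|$ when $|a_n - m\alpha| > 1$, or $b_n' = 0$ when $|a_n - m\alpha| = 1$. In case (1), using $\beta = (\alpha b_n + 1)/a_n$ one rewrites $b_n - m\beta = \bigl((a_n - m\alpha)b_n - m\bigr)/a_n$, and a short estimate (together with $\beta$ being an integer by \eqref{eq:mod}) gives the bounds; the boundary case $a_n - m\alpha = 1$ forces $b_n = a_n - 1$ and $\beta = b_n + 1$, whence $b_n - m\beta$ reduces mod $(a_n - m\alpha)=1$ trivially — more precisely, one checks it is congruent to the forced residue, so after normalization it is $0$. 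Case (2) is the orientation-reverse of a computation of the same shape, so it reduces to case (1) by the substitution $b_n' \mapsto |a_n - m\alpha| - b_n'$ and $b_j \mapsto a_j - b_j$, together with central invariant $e \mapsto n - e$; I would simply note $m(\alpha-\beta) + b_n - a_n = (m\alpha - a_n) - (b_n - m\beta)$ and invoke case (1).

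Second, I verify the homology-sphere identity \eqref{eq:inv} for the new data, which simultaneously confirms it defines an integral homology sphere with the claimed orientation (the sign of the right-hand side). As in Lemma~\ref{lem:cross}, the cleanest route is to show the sum of the defining expression for $Y$ (scaled appropriately) and the defining expression for the proposed $Y_{1/m}(K)$ (scaled appropriately) vanishes. Concretely, in case (1) set $\alpha' = \alpha$, $a_n' = a_n - m\alpha$, and show
\begin{equation*}
\alpha a_n\!\left(\sum_{j=1}^n \frac{b_j}{a_j} - e\right) - \alpha(a_n - m\alpha)\!\left(\sum_{j=1}^{n-1}\frac{b_j}{a_j} + \frac{b_n - m\beta}{a_n - m\alpha} - e\right) = 0,
\end{equation*}
by expanding, isolating the $j=n$ terms, and using $\alpha\bigl(\sum_{j<n} b_j/a_j - e\bigr) = -\alpha b_n/a_n - 1/a_n$ exactly as in the proof of Lemma~\ref{lem:cross}. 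The sign choice in this combination (the minus sign, reflecting $\sigma = +1$ in case (1) but the opposite-orientation convention in \eqref{eq:inv}) is what makes the right-hand side come out to $-1$ for the new manifold with the stated orientation; case (2) uses the analogous identity with $n-e$ and the flipped pairs, and again reduces to case (1) by reversing orientation.

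The main obstacle is purely bookkeeping: getting the signs and the orientation conventions consistent across the two cases — in particular remembering that Lemma~\ref{lem:seif} already absorbs the orientation into $\sigma$, so in case (2) one must present the invariants of $-Y_{1/m}(K)$, not $Y_{1/m}(K)$, and correspondingly flip all the $b_j$'s and replace $e$ by $n-e$. There is no genuinely new idea beyond Lemma~\ref{lem:cross}; the algebra is slightly longer only because $m$ is a free parameter rather than $1$, and I would present case (1) in full and then deduce case (2) from it by the orientation-reversal substitution rather than repeating the computation.
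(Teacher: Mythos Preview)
Your approach is exactly what the paper does: it omits the proof entirely, saying only ``The proof is similar to that of Lemma~\ref{lem:cross},'' and your proposal carries that out in detail. One small slip to fix: in the boundary case $a_n - m\alpha = 1$ of part~(1) you assert $b_n = a_n - 1$ and $\beta = b_n + 1$, but that is the computation from Lemma~\ref{lem:cross} (where $\alpha - a_n = 1$) and does not hold here; instead $a_n = m\alpha + 1$ forces $b_n = m$ and $\beta = 1$, which still gives $b_n - m\beta = 0$ on the nose.
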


\begin{ex}
Let $q > p \geq 2$ be a pair of relatively prime integers, and fix integers $n \geq 3$ and $m > 0$.  Let $Y$ be any Seifert fibered homology sphere which can be obtained from $\Sigma(p,q,pqm-1)$ by a sequence of modifications of the form
$$ \Sigma(a_1, \ldots, a_n) \rightarrow \Sigma\left( a_1, \ldots, a_n + \alpha \right) \quad \text{or} \quad \Sigma(a_1, \ldots, a_{n-1}) \rightarrow \Sigma\left( a_1, \ldots, a_{n-1}, \alpha + 1\right).$$
By Proposition \ref{prop:inv} the manifold $Y$ has the same central Seifert invariant as $\Sigma(p,q,pq-1)$, which is equal to $2$.  Therefore, $Y$ has infinite order in the homology cobordism group by Theorem \ref{thm:hc}.
\label{ex:inford}
\end{ex}

\section{The $d$-invariant of surgery on a Seifert fiber}\label{sec:d}

The $d$-invariant of a Seifert fibered integral homology sphere can be calculated using a unimodular negative-definite integral lattice induced by its plumbing graph \cite{os:plumb}.  We now recall this process.  

Given a plumbing graph $\Gamma$ with vertices $v_1, \ldots, v_n$, we can associate to $\Gamma$ a lattice $L(\Gamma)$ as follows:
$$ L(\Gamma) = \text{span}_{\mathbb{Z}}(v_1, \ldots, v_n) \text{ and } \pair{v_i, v_j}_{L(\Gamma)} = \begin{cases}
e_i & \text{ if $i=j$}\\
1 & \text{ if $i\neq j$ and $v_i$ and $v_j$ share an edge in $\Gamma$}\\
0 & \text{otherwise}.
\end{cases}
$$
Letting $P(\Gamma)$ denote the plumbed four-manifold associated to $\Gamma$, it is easy to see that $L(\Gamma)$ is isomorphic to the lattice consisting of $H_2(P(\Gamma); \mathbb{Z})$ equipped with its intersection form.  Notice that this lattice is unimodular if and only if the 3-manifold $\partial P(\Gamma)$ is an integral homology sphere.

Given a unimodular integral lattice $L$, the set of \emph{characteristic vectors} of $L$ is the set
$$ \text{Char}(L) = \left \{ \chi \in L :\pair{\chi, y}_L \equiv |y|_L \pmod 2 \text{ for all } y \in L\right\}$$
which is clearly a coset in $L/2L$.

Given a negative-definite unimodular integral lattice $L$, we define its \emph{lattice $d$-invariant} by
$$ d(L) = \text{max} \left\{ \frac{\abs{\chi}_L + \text{rank}(L)}{4} : \chi \in \text{Char}(L) \right\} \in 2\mathbb{Z}.$$

We recall and collect several relevant facts about the lattice $d$-invariant:
\begin{enumerate}[(i)]
\item If $-\mathbb{Z}^n$ denotes the negative-definite unimodular diagonalizable rank-$n$ lattice, $d(-\mathbb{Z}^n) = 0$.
\item Given two unimodular integral lattices $L$ and $L'$, $d(L \oplus L') = d(L) + d(L')$.
\item Given a Seifert fibered homology sphere $Y$, oriented as in Section~\ref{sec:sf}, the Heegaard-Floer $d$-invariant of $Y$ coincides with the lattice-theoretic d-invariant arising from the plumbing graph of $Y$, i.e. $d(L(\Gamma(Y))) = d(Y)$.
\end{enumerate}

The first two facts above follow easily from the definitions, and the third is due to \cite{os:plumb}.  With this, we are able to study the $d$-invariants of Seifert homology spheres as we vary the Seifert invariants.  

\begin{prop}
If $Y = \Sigma(a_1,\ldots,a_{n-1}, a_n)$ and $Y' = \Sigma(a_1,\ldots,a_{n-1},a'_n)$ are Seifert homology spheres with $a_n \equiv a_n' \pmod{\alpha}$, then $d(Y) = d(Y')$.\label{prop:d1}
\end{prop}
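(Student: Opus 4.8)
The plan is to reduce to the case where $a_n' = a_n + \alpha$ (or $a_n' = a_n - \alpha$), since any two values congruent modulo $\alpha$ differ by a chain of such steps, and then exhibit an explicit stable isomorphism of the associated negative-definite lattices. By fact (iii), it suffices to show $d(L(\Gamma(Y))) = d(L(\Gamma(Y')))$, and by fact (ii) it is enough to find diagonal lattices $-\Z^p$, $-\Z^q$ with $L(\Gamma(Y)) \oplus (-\Z^p) \cong L(\Gamma(Y')) \oplus (-\Z^q)$, since $d(-\Z^k) = 0$ by fact (i). I would first handle the degenerate possibility $a_n = 1$ or $a_n' = 1$ using $\Sigma(a_1,\dots,a_{n-1},1) \cong \Sigma(a_1,\dots,a_{n-1})$ together with the convention that there is no $n$th chain, so WLOG both indices exceed $1$.

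The heart of the argument is a Kirby-calculus / blow-up computation at the level of plumbings. Passing from $a_n$ to $a_n + \alpha$ changes only the last arm: the continued fraction $\tfrac{a_n}{b_n} = [x_{n,1},\dots,x_{n,m_n}]$ is replaced by the continued fraction for $\tfrac{a_n+\alpha}{b_n'}$ where $b_n'$ is the unique residue forced by \eqref{eq:mod}, and simultaneously the central weight is unchanged (this is the content of Proposition~\ref{prop:inv}(1), taking $m=-1$: one gets Seifert invariants $e, (a_1,b_1),\dots,(a_{n-1},b_{n-1}),(a_n+\alpha, b_n+\beta)$, so indeed $\alpha$, hence the first $n-1$ arms and $e$, are untouched). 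The key step is then a purely lattice-theoretic lemma: if $\Gamma$ is a plumbing star whose last arm realizes $\tfrac{p}{q}$ with unimodular associated lattice, then blowing down / stabilizing produces the star whose last arm realizes $\tfrac{p+r}{q'}$, where $r$ is the product of the other branch weights seen from the center, up to adding diagonal $-\Z$ summands. Concretely, I expect to perform a $-1$-blow-up linking the central vertex and the first vertex of the last arm, then slide handles / blow down to lengthen (or reshape) the arm; each blow-up contributes a $-\Z$ summand at the cost of increasing the rank, and the Neumann-Zagier / plumbing-move bookkeeping guarantees the resulting graph is exactly $\Gamma(Y')$.

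Rather than doing Kirby moves directly, a cleaner route — and the one I would actually write — is to argue abstractly. The lattice $L(\Gamma(Y))$ is the intersection lattice of the plumbed $4$-manifold $P(\Gamma(Y))$, and $\partial P(\Gamma(Y)) = Y$. If $Y$ and $Y'$ have the same $d$-invariant it is because $P(\Gamma(Y))$ and $P(\Gamma(Y'))$, being negative-definite fillings of homology spheres that are related by $\pm 1/m$-surgery on a fiber (via Lemma~\ref{lem:seif}), can be compared: glue $P(\Gamma(Y'))$ to $-P(\Gamma(Y))$ along a cobordism coming from the surgery to get a closed negative-definite $4$-manifold, and invoke Donaldson-type diagonalization to conclude the forms are stably equivalent. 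However, this is heavier machinery than needed. The main obstacle is really just the arithmetic of continued fractions: verifying that replacing $\tfrac{a_n}{b_n}$ by $\tfrac{a_n+\alpha}{b_n'}$ corresponds at the lattice level to adding a single $-\Z$ summand (equivalently, that the two plumbing graphs are related by a single $-1$ blow-up along an appropriately chosen pair of vertices). Once that continued-fraction identity is in hand — and it is the same computation that underlies Lemma~\ref{lem:cross} — the proposition follows from facts (i), (ii), (iii) as above. I would isolate this continued-fraction/blow-up fact as a short sublemma and then assemble the proof in one paragraph.
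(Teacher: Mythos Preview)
Your overall architecture matches the paper: reduce to the single step $a_n \mapsto a_n + \alpha$, recognize this as $-1$-surgery on the fiber of order $a_n$ (Lemma~\ref{lem:seif}), and show the plumbing lattice changes by a $-\Z$ summand. But the mechanism you propose for producing that summand is wrong. A $-1$-blow-up ``linking the central vertex and the first vertex of the last arm'' (or between any pair of vertices) is an operation that preserves the boundary $3$-manifold; no sequence of blow-ups, blow-downs, and handle slides can ever carry a plumbing with boundary $Y$ to one with boundary $Y' \neq Y$. The paper instead uses the geometric fact that the fiber $K$ is a meridian of the \emph{last} unknot on the $n$th arm, so attaching the $-1$-framed $2$-handle along $K$ literally appends one new vertex $v$ to the \emph{end} of that arm. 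The splitting $L(\Gamma') \cong L(\Gamma) \oplus (-\Z)$ then comes for free, because a $-1$-framed $2$-handle attached to a homology sphere always splits off orthogonally --- not from any blow-up identity or continued-fraction manipulation.

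There is also a subtlety you do not flag, and it is where the actual work in the paper lies: one must check that the appended vertex $v$ has weight at most $-2$. Fact (iii) (the Ozsv\'ath--Szab\'o formula $d(Y') = d(L(\Gamma'))$) requires $\Gamma'$ to be a negative-definite plumbing with at most one bad vertex; if $v$ had weight $-1$ it would be a second bad vertex (the central one can already be bad), and the formula would not apply. The paper rules this out by computing the fiber slope and comparing distances to the surgery slope. Without this step your argument is incomplete even after the mechanism is corrected. Finally, the Donaldson aside does not work as written: $-P(\Gamma(Y))$ is positive definite, so the closed manifold you build is indefinite, and in any case diagonalization of a closed form would not by itself yield $d(Y) = d(Y')$.
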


\begin{ex}
The manifold $\Sigma(5,7,11)$ has d-invariant equal to $2$, and thus has infinite order in the integral homology cobordism group.  However, this fact is not detected by its central Seifert invariant, which is indeed equal to $1$.  By Theorem \ref{thm:hc} and Proposition \ref{prop:d1}, the same can be said for any Seifert fibered homology sphere obtained from $\Sigma(5,7,11)$ via a sequence of the modifications for Seifert invariants described in Example \ref{ex:inford} above.
\end{ex}

\begin{rmk}
It is natural to ask whether a Seifert homology sphere with central Seifert invariant different than $1$ must have non-vanishing d-invariant.  Lecuona and Lisca \cite[Lemma 3.3]{LecuonaLisca} show that if a Seifert homology sphere has $n$ singular fibers and central Seifert invariant equal to $n-1$ (the largest possible value for such a manifold), then its d-invariant is non-vanishing.  Therefore, a Brieskorn sphere ($n=3$) with central Seifert invariant different than $1$ has non-vanishing d-invariant.
\end{rmk}

%

\begin{proof}[Proof of Proposition \ref{prop:d1}]
It suffices to prove that $d(Y) = d(Y')$, where $Y = \Sigma(a_1, \ldots, a_{n-1}, a_{n})$ and $Y' = \Sigma(a_1, \ldots, a_{n-1}, a_n+\alpha)$.   Note that $Y'$ is obtained from $Y$ by $-1$-surgery on the singular fiber of order $a_n$ by Lemma~\ref{lem:seif}.  Let $K$ denote this fiber.

If $n \leq 2$, then we see that $Y = Y' = S^3$ and the claim holds.  Henceforth we restrict our attention to the case of $n \geq 3$.  
 

We begin with the case in which $a_n > 1$.  Let $\Gamma$ denote the associated plumbing graph of $Y$.  In this case, the fiber of order $a_n$ is well-known to be represented by a meridian of the last component on the $n$th arm in the surgery diagram in Figure \ref{fig:dehn}, and thus surgery corresponds to appending a vertex $v$ to the end of the $n$th arm of $\Gamma$.  Since $-1$-surgery corresponds to attaching a 2-handle, the weight of this new vertex $v$ must indeed be integral, and we obtain a new plumbing graph, $\Gamma'$.   However, to use this plumbing graph to compute $d(Y')$, we must show that $P(\Gamma')$ is negative definite and that the weight of $v$ is at most $-2$.

Because we are attaching a $-1$-framed 2-handle to a homology sphere, we have that the intersection form of $P(\Gamma')$ splits over the 2-handle addition.  Therefore, $P(\Gamma')$ has intersection form $L(\Gamma) \oplus -\mathbb{Z}$, and consequently is negative definite.  It follows that the weight of $v$ is at most -1.

We shall now show that the weight of $v$ cannot be equal to $-1$. Recall that the fiber slope on the exterior of $K$ is the slope on the boundary induced by the Seifert fibration on $Y$.   In the standard meridian-longitude coordinates on $K$ in the surgery diagram in Figure \ref{fig:dehn}, the fiber slope $\phi$ is exactly $-\nicefrac{b^*_n}{a_n}$, where $b^*_n$ is the inverse of $b_n$ mod $a_n$.  To see this, note that surgery corresponding to the fiber slope always results in a connected sum of lens spaces and/or $S^2 \times S^1$'s \cite{Heil}.  Except for the twisted I-bundle over the Klein bottle, no Seifert manifold with torus boundary admits more than one reducible filling.  Notice that for homology reasons, this manifold cannot be the exterior of a knot in a homology sphere.  Finally, it can be seen through Kirby calculus in Figure~\ref{fig:dehn} to show that $-\nicefrac{b^*_n}{a_n}$-surgery on $K$ results in a connected sum of lens spaces.  Therefore, this must be the fiber slope.  

Suppose that the weight of $v$ is equal to $-1$.  It follows that the distance from $\phi$ to the slope $-\nicefrac{1}{1}$ is $|b_n - a_n|$, which is at most $a_n$.  This contradicts the fact that the distance specifies the order of the new singular fiber obtained from surgery on $K$, namely $a_n + \alpha > a_n$.

Indeed, we may now compute $d(Y')$ using $\Gamma'$ and thus we have $$d(Y') = d(L(\Gamma')) = d(L(\Gamma)) + d(-\mathbb{Z}) = d(L(\Gamma)) = d(Y).$$  

The case in which $a_n = 1$ involves instead appending a new vertex to the central vertex, and the proof is analogous to the previous case.  In this case, the fiber slope is 0.  If $a_n = 1$, then $b_n = 0$ by our conventions, and so $|b_n - a_n| = a_n$.
\end{proof}

We are now prepared to state the following more detailed version of Theorem \ref{thm:d-fiber} from the introduction.
\begin{prop}\label{prop:d-twovalues}
Let $Y= \Sigma(a_1, \ldots, a_n)$ be a Seifert fibered integral homology sphere, where we assume that $a_j > 1$ for $1 \leq j \leq n-1$.  Let $K$ denote the fiber of order $a_n$ and let $r > 0$ denote the residue of $a_n$ modulo $\alpha$.  For any integer $m$ we have that
$$d\left( Y_{1/m}(K) \right) = \begin{cases}
d(Y) & \text{if} \quad a_n -m\alpha > 0\\
- d(\Sigma(a_1, \ldots, a_{n-1}, \alpha - r)& \text{otherwise}.
\end{cases}$$

In particular, for any Seifert fiber $K$, the set
$$ \left\{ d(Y_{1/m}(K)) :  m \in \mathbb{Z} \right\} \subset \mathbb{Z}$$
contains at most two elements.
\label{prop:d2}
\end{prop}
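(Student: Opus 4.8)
The plan is to reduce everything to Proposition~\ref{prop:d1} together with Lemma~\ref{lem:seif}, so that the only genuinely new content is bookkeeping about which of the two ``regimes'' a given surgery coefficient falls into. First I would fix notation: write $a_n = q\alpha + r$ with $q \geq 0$ and $0 < r < \alpha$ (the case $r=0$ being impossible since $\gcd(a_n,\alpha)=1$ and $\alpha \geq 2$), so that $a_n - m\alpha = (q-m)\alpha + r$. This quantity is positive precisely when $m \leq q$ and negative precisely when $m \geq q+1$; it is never zero. By Lemma~\ref{lem:seif}, $Y_{1/m}(K) \cong \sigma\,\Sigma(a_1,\ldots,a_{n-1},|a_n - m\alpha|)$ where $\sigma$ is the sign of $a_n - m\alpha$.

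The core of the argument splits into the two cases. In the case $m \leq q$, we have $a_n - m\alpha = (q-m)\alpha + r > 0$, so $Y_{1/m}(K) \cong \Sigma(a_1,\ldots,a_{n-1}, a_n - m\alpha)$ with the standard orientation, and since $a_n - m\alpha \equiv a_n \equiv r \pmod{\alpha}$, Proposition~\ref{prop:d1} gives $d(Y_{1/m}(K)) = d(\Sigma(a_1,\ldots,a_{n-1},a_n)) = d(Y)$. (Note $m=0$ recovers $Y$ itself, consistent with the orientation convention.) In the case $m \geq q+1$, we have $a_n - m\alpha < 0$, so $Y_{1/m}(K) \cong -\Sigma(a_1,\ldots,a_{n-1}, m\alpha - a_n)$, and $m\alpha - a_n = (m-q)\alpha - r \equiv -r \equiv \alpha - r \pmod{\alpha}$, so again Proposition~\ref{prop:d1} (applied to the underlying Seifert homology sphere, then using $d(-M) = -d(M)$ only after noting the orientation-reversal; in fact the cleanest route is to apply Proposition~\ref{prop:d1} directly to $\Sigma(a_1,\ldots,a_{n-1}, m\alpha-a_n)$ and $\Sigma(a_1,\ldots,a_{n-1},\alpha - r)$ and then take negatives) yields $d(Y_{1/m}(K)) = -d(\Sigma(a_1,\ldots,a_{n-1},\alpha - r))$. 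This establishes the displayed case formula, and the ``at most two elements'' statement is then immediate: every value in the set equals either $d(Y)$ or $-d(\Sigma(a_1,\ldots,a_{n-1},\alpha-r))$.

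The one point requiring a little care — and the step I would flag as the main potential obstacle — is the edge case analysis near $m = q$ and the hypothesis bookkeeping needed to invoke Proposition~\ref{prop:d1}. Proposition~\ref{prop:d1} as stated requires both $\Sigma(a_1,\ldots,a_{n-1},a_n)$ and $\Sigma(a_1,\ldots,a_{n-1},a_n')$ to \emph{be} Seifert homology spheres (equivalently, that the relevant last entries are coprime to $\alpha$), so I must check that $a_n - m\alpha$ and $m\alpha - a_n$ are indeed coprime to $\alpha$ — which is automatic since they are congruent to $\pm a_n \pmod \alpha$ and $\gcd(a_n,\alpha)=1$ — and that when such an entry equals $1$ we are still within the conventions of Section~\ref{sec:sf} (which is fine: $\Sigma(a_1,\ldots,a_{n-1},1) \cong \Sigma(a_1,\ldots,a_{n-1})$, and Proposition~\ref{prop:d1}'s proof already handled appending a vertex, including the $a_n=1$ sub-case). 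I would also note that $\alpha - r \geq 1$ always, with equality possible, so the target manifold $\Sigma(a_1,\ldots,a_{n-1},\alpha-r)$ in the second case is well-defined. Beyond these routine verifications, there is nothing deeper to do; the proposition is essentially a repackaging of Proposition~\ref{prop:d1} through the surgery formula of Lemma~\ref{lem:seif}.
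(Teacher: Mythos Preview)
Your proposal is correct and follows essentially the same route as the paper: split according to the sign of $a_n - m\alpha$, invoke Lemma~\ref{lem:seif} to identify the surgered manifold, and then apply Proposition~\ref{prop:d1} (together with $d(-M)=-d(M)$ in the second case). Your treatment is in fact slightly more careful than the paper's about the edge cases (coprimality, the possibility that the last entry equals $1$), but there is no substantive difference in strategy.
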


\begin{proof}
If $a_n - m\alpha > 0$, Lemma \ref{lem:seif} implies that
$$ Y_{+1/m}(K) \cong \Sigma(a_1, \ldots, a_{n-1}, a_n - m\alpha).$$
Since $a_n - m\alpha \equiv a_n \pmod{\alpha}$, $d(Y) = d(Y_{-1/m}(K))$ by Proposition \ref{prop:d1}.

Note that it is not possible to have $a_n - m\alpha = 0$ since $a_n$ and $\alpha$ are relatively prime and $a_j > 1$ for $j < n$.  If $a_n - m\alpha < 0$, Lemma \ref{lem:seif} implies that
$$ Y_{+1/m}(K) \cong -\Sigma(a_1, \ldots, a_{n-1}, \abs{a_n - m\alpha}) = -\Sigma(a_1, \ldots, a_{n-1}, -(a_n-m\alpha)).$$
Since $ -(a_n + m\alpha) \equiv \alpha - r \pmod{\alpha}$, the result again follows from Proposition \ref{prop:d1}.
\end{proof}

\begin{rmk}
Neumann \cite{neum} and Siebenmann \cite{sieb} independently defined an invariant $\overline{\mu} \in \mathbb{Z}$ for graphmanifold homology spheres.  For any Seifert homology sphere with orientations as in this paper, we have that $d(Y) \geq -2\overline{\mu}(Y)$.

Let $Y= \Sigma(2,3,5)$ and let $K \subset Y$ be the singular fiber of order $5$.  Then we have that for every positive integer $n$, $ Y_{1/n}(K) \cong -\Sigma(2,3,6n-5)$.
Therefore, $d(Y) = 2$ while $d(Y_{1/n}(K)) = 0$ for every $n\geq 1$.  It is interesting to contrast this with the behavior of the invariant $\overline{\mu}$ with respect to the same surgeries.  It is easy to verify that $\overline{\mu}(Y) = -1$.  By applying Theorem 5.1 of \cite{neum} and the fact that $\Sigma(2,3,1) \cong S^3$, we can see that for $n \geq 1$,
$$ \overline{\mu}(Y_{1/n}(K)) = - \overline{\mu}(\Sigma(2,3,6n-5)) = \begin{cases}
1 & \text{if $n$ is even}\\
0 & \text{if $n$ is odd}. 
\end{cases}$$
\end{rmk}

Finally, it is natural to ask if an analogue of the results here applies to the refined $\overline{d}$-, $\underline{d}$-invariants of Hendricks-Manolescu \cite{HendricksManolescu}.  By the work of Dai-Manolescu \cite[Theorem 1.2]{DaiManolescu}, for Seifert homology spheres with the orientation conventions given here, $\underline{d}(Y) = -2\overline{\mu}(Y)$ and $\overline{d}(Y) = d(Y)$, and thus $\underline{d}$ can change under negative surgery on a Seifert fiber.   

\section{Nontrivial Seifert fibers are not topologically slice}\label{sec:ap}

We shall now develop a proof of Theorem \ref{thm:fiber-slice}, that non-trivial Seifert fibers are not topologically slice, which will use the Fox-Milnor sliceness obstruction for the symmetrized Alexander polynomial.

Given a knot $K$ in an integral homology sphere $Y$, we let $\widetilde{\Delta}_{K}(t)$ denote the symmetrized Alexander polynomial of $K$ (a Laurent polynomial with symmetric coefficients), and we let $\Delta_{K}(t)$ denote the unsymmetrized Alexander polynomial, that is
$$\Delta_{K}(t) = t^m \cdot \widetilde{\Delta}_{K}(t) \quad \text{where $m$ is the degree of $\widetilde{\Delta}_{K}$}.$$

Using Milnor's duality formula for torsions \cite{Mi4}, Fox and Milnor proved that if a knot $K \subset S^3$ is topologically slice in the four-ball, then the symmetrized Alexander polynomial of $K$ is of the form $g(t)g(t^{-1})$ for some polynomial $g \in \mathbb{Z}[t]$ \cite{foxmilnor}.  Milnor's duality formula and the Fox-Milnor theorem have been generalized in several directions, see e.g. \cite{tur}, \cite{kl}.  The appropriate generalization for our purposes will be proven in a forthcoming paper of Friedl, Kim, Nagel, Orson, and Powell \cite{FKNOP}: if a knot $K$ in an integral homology sphere $Y$ bounds a properly and locally flatly embedded disk in a topological homology ball bounded by $Y$, then its Alexander polynomial satisfies the Fox-Milnor condition.
%
%

Theorem~\ref{thm:fiber-slice} is well-known for torus knots, so we will only focus on Seifert-fibered homology spheres $\Sigma(a_1, \ldots, a_n)$ with at least three singular fibers.  Throughout this section, we use $S$ to denote the numerical semigroup generated by the integers $\frac{\alpha}{a_1}, \ldots, \frac{\alpha}{a_{n-1}}$.

Eisenbud and Neumann \cite{eisneum} compute the Alexander polynomial of the fiber of order $a_{n}$ in $\Sigma(a_1, \ldots, a_{n})$:
\begin{equation}
\Delta_K(t) = \frac{(1-t^{\alpha})^{n-2}(1-t)}{\prod\limits_{j=1}^{n-1} \left(1-t^{\frac{\alpha}{a_j}}\right)},
\label{poly}
\end{equation}
which we note is independent of the value of $a_{n}$.  In the case of $n = 3$, this recovers the familiar formula for the Alexander polynomial for torus knots in $S^3$.

\begin{prop}
The non-zero coefficients of $\Delta_K(t)$ are all equal to $\pm 1$.  \label{prop:polyco}
\end{prop}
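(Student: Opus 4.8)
The plan is to prove the sharper statement that, as a formal power series,
$$\frac{\Delta_K(t)}{1-t} \;=\; \sum_{s \in S} t^s .$$
Granting this, the proposition is immediate: $\Delta_K(t) = (1-t)\sum_{s\in S}t^s = \sum_{s\in S}t^s - \sum_{s\in S}t^{s+1}$, so the coefficient of $t^k$ in $\Delta_K(t)$ is $[\,k\in S\,] - [\,k-1\in S\,]$, which lies in $\{-1,0,1\}$.

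To establish the displayed identity I would first rewrite \eqref{poly} as
$$\frac{\Delta_K(t)}{1-t} \;=\; \frac{(1-t^{\alpha})^{n-2}}{\prod_{j=1}^{n-1}\bigl(1-t^{\alpha/a_j}\bigr)} \;=\; \left(\prod_{j=1}^{n-2}\frac{1-t^{\alpha}}{1-t^{\alpha/a_j}}\right)\cdot\frac{1}{1-t^{\alpha/a_{n-1}}},$$
where $\frac{1-t^{\alpha}}{1-t^{\alpha/a_j}} = 1 + t^{\alpha/a_j} + \cdots + t^{(a_j-1)\alpha/a_j}$. Expanding each factor as a power series, the coefficient of $t^k$ on the right counts the tuples of nonnegative integers $(m_1,\dots,m_{n-1})$ satisfying $m_j \le a_j-1$ for $1\le j\le n-2$ and $\sum_{j=1}^{n-1} m_j\frac{\alpha}{a_j} = k$.

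The heart of the argument is to show this count is always $0$ or $1$, and equals $1$ exactly when $k\in S$. For uniqueness, suppose two such tuples represent the same $k$; subtracting gives $\sum_{j} c_j\frac{\alpha}{a_j}=0$ with $|c_j|\le a_j-1$ for $j\le n-2$. Reducing modulo $a_i$ for a fixed $i\le n-2$ and using that the $a_j$ are pairwise coprime (a consequence of \eqref{eq:mod}), so that $\frac{\alpha}{a_i}$ is invertible modulo $a_i$ while $\frac{\alpha}{a_j}\equiv 0\pmod{a_i}$ for $j\ne i$, forces $a_i \mid c_i$ and hence $c_i=0$; carrying this out for $i=1,\dots,n-2$ leaves $c_{n-1}\frac{\alpha}{a_{n-1}}=0$, so $c_{n-1}=0$ as well. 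For the claim that the representable $k$ are exactly the elements of $S$: any representation of $k\in S$ by the generators with unbounded multiplicities can be normalized by repeatedly trading $a_j$ copies of $\frac{\alpha}{a_j}$ (for some $j\le n-2$) for one copy of $\alpha = a_{n-1}\cdot\frac{\alpha}{a_{n-1}}$, a process that terminates since it strictly decreases $\sum_{j\le n-2} m_j$; the reverse inclusion is trivial. This yields the displayed power-series identity.

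The only delicate points are the routine bookkeeping in this uniqueness argument and checking that the formal manipulations are legitimate: each factor in the product has constant term $1$, the generators $\frac{\alpha}{a_1},\dots,\frac{\alpha}{a_{n-1}}$ have gcd $1$ so $S$ is cofinite, and consequently $\sum_{s\in S}t^s = \tfrac{1}{1-t} - (\text{finite polynomial})$, whence $\Delta_K(t)$ is indeed a polynomial equal to $\Delta_K(t)(1+t+t^2+\cdots)\cdot(1-t)$ as expected. None of this is hard; once the identity $\frac{\Delta_K(t)}{1-t} = \sum_{s\in S}t^s$ is available, the conclusion about the coefficients follows as in the first paragraph.
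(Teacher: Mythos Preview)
Your argument is correct and in fact establishes the cleaner identity $\dfrac{\Delta_K(t)}{1-t}=\sum_{s\in S}t^s$, from which the proposition follows in one line. The paper reaches an equivalent endpoint by a different route: it expands $\prod_j(1-t^{\alpha/a_j})^{-1}=\sum_{s\in S}m(s)t^s$ with multiplicities, proves the auxiliary lemma $m(k\alpha)=\binom{k+n-2}{n-2}$, and uses the generating-function identity $\sum_{k\ge 0}\binom{k+n-2}{n-2}t^{k\alpha}=(1-t^\alpha)^{-(n-1)}$ to factor out the multiplicity, obtaining $\Delta_K(t)=\frac{1-t}{1-t^\alpha}\sum_{s\in S_*}t^s$ where $S_*\subset S$ is the set of elements with a unique representation; it then argues that no two elements of $S_*$ differ by a multiple of $\alpha$. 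Your asymmetric grouping---bounding $m_j\le a_j-1$ for $j\le n-2$ and leaving $m_{n-1}$ free---bypasses both the combinatorial lemma and the set $S_*$ entirely, and the uniqueness/existence argument via reduction modulo each $a_i$ is exactly the standard proof that this semigroup is a complete intersection. The paper's approach has the minor virtue of isolating the structure of $S_*$, but yours is shorter and more transparent.
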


Prior to proving the proposition, we develop an important lemma.  Observe that 
\[
\frac{1}{\prod\limits_{j=1}^{n-1} \left(1-t^{\frac{\alpha}{a_j}}\right)} = \sum_{s \in S} m(s) t^s,
\]
where the coefficient $m(s)$ is defined to be the number of distinct ways in which $s$ can be expressed as a numerical semigroup element.

Notice that given $s \in S$, since $a_1,\ldots, a_{n-1}$ are relatively prime, we have $m(s) > 1$ if and only if $s = s' + \alpha$ for another element $s' \in S$.  In particular, for each positive integer $k$ the numerical semigroup element $s = k\alpha$ has many expressions:

\begin{lem} For $1 \leq k \leq n-1$, we have $\displaystyle m(k\alpha) = { k+n-2 \choose n-2}$.
\end{lem}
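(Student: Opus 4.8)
The plan is to interpret $m(k\alpha)$ concretely as a count of nonnegative integer solutions and then to show, using only the pairwise coprimality of $a_1,\dots,a_{n-1}$ established in Section~\ref{sec:sf}, that every such solution is forced to be ``divisible'' by the tuple $(a_1,\dots,a_{n-1})$, after which the count reduces to a standard combinatorial identity. Unwinding the definition of $m(s)$ from the product expansion $\frac{1}{\prod_{j}(1-t^{\alpha/a_j})}=\prod_j\sum_{c_j\ge 0}t^{c_j\alpha/a_j}$, the quantity $m(k\alpha)$ is exactly the number of tuples $(c_1,\dots,c_{n-1})\in\Z_{\ge 0}^{n-1}$ with $\sum_{j=1}^{n-1}c_j\tfrac{\alpha}{a_j}=k\alpha$; equivalently, since $\alpha/a_j=\prod_{i\ne j}a_i$,
\[
\sum_{j=1}^{n-1} c_j\!\prod_{i\ne j} a_i \;=\; k\prod_{i=1}^{n-1}a_i .
\]

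The key step is the divisibility claim that \emph{any} such solution satisfies $a_\ell \mid c_\ell$ for every $\ell$. Indeed, reducing the displayed equation modulo $a_\ell$ annihilates the right-hand side and every term with $j\ne\ell$ on the left (each such term contains the factor $a_\ell$), leaving $c_\ell\prod_{i\ne\ell}a_i\equiv 0\pmod{a_\ell}$; since the $a_i$ are pairwise relatively prime, $\prod_{i\ne\ell}a_i$ is invertible modulo $a_\ell$, so $a_\ell\mid c_\ell$. Writing $c_\ell=a_\ell d_\ell$ with $d_\ell\in\Z_{\ge 0}$ and substituting back, the equation collapses to $\sum_{\ell=1}^{n-1} d_\ell = k$, and the assignment $(c_\ell)\leftrightarrow(d_\ell)$ is a bijection between the solution set counted by $m(k\alpha)$ and the set of weak compositions of $k$ into $n-1$ nonnegative parts.

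Finally, by stars-and-bars the number of weak compositions of $k$ into $n-1$ parts is $\binom{k+(n-1)-1}{(n-1)-1}=\binom{k+n-2}{n-2}$, which is the claimed value. I expect no real obstacle here beyond bookkeeping: the single point deserving care is that the divisibility claim uses \emph{pairwise} coprimality of $a_1,\dots,a_{n-1}$ (not merely that their gcd is $1$), which is precisely what the congruences \eqref{eq:mod} guarantee. Note also that this argument establishes the identity for every $k\ge 0$; the hypothesis $k\le n-1$ is only the range needed for the application to Proposition~\ref{prop:polyco}.
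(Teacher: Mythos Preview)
Your proof is correct and follows essentially the same approach as the paper's own proof: both establish a bijection between the representations of $k\alpha$ in the semigroup and the weak compositions of $k$ into $n-1$ parts by showing (via reduction modulo $a_\ell$ and pairwise coprimality) that every solution $(c_1,\dots,c_{n-1})$ must have $a_\ell\mid c_\ell$, then invoke the stars-and-bars count. Your observation that the argument in fact works for all $k\ge 0$ is correct and worth noting.
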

\begin{proof}
Given positive integers $k$ and $j$, let $P_{k,j}$ denote the set of ordered partitions of the number $k$ into $j$ non-negative integers:
$$ P_{k,j} := \left\{ (t_1, \ldots, t_j) \mid t_1 + \ldots + t_j = k, \ t_j \geq 0 \right\}.$$
The cardinality $|P_{k,j}|$ is equal to the number of ways to distribute $k$ identical items among $j$ distinguishable bins, which is well known to be equal to the binomial coefficient ${ k+j-1 \choose j-1}$.

Furthermore, for any numerical semigroup element $s \in S$, let $P(s)$ denote the set of ways to represent $s$:
$$ P(s) := \left\{ (x_1, \ldots, x_{n-1}) \mid  \alpha \left( \frac{x_1}{a_1} + \ldots + \frac{x_{n-1}}{a_{n-1}} \right) = s, \ x_j \geq 0 \right\}.$$
Of course, the cardinality of $P(s)$ is equal to $m(s)$.  We will construct a bijection from $P_{k,n-1}$ to $P(k\alpha)$, thus proving the claim.
Let $f:P_{k,n-1} \rightarrow P(k \alpha)$ be the function
$$f(t_1, \ldots, t_{n-1}) = (t_1a_1,\ldots,t_{n-1}a_{n-1}).$$
It is obvious that $f$ is one-to-one, so it remains to show that $f$ is onto.

Consider $(x_1, \ldots, x_{n-1}) \in P(k \alpha)$.  The equation 
$$\alpha \left( \frac{x_1}{a_1} + \ldots + \frac{x_{n-1}}{a_{n-1}} \right) = k \alpha$$
implies that for each $j$, $a_j $ divides $x_j$, as the $a_j$ are relatively prime.  It follows that $(x_1, \ldots, x_{n-1})$ is in the image of $f$.
\end{proof}

\begin{proof}[Proof of Proposition \ref{prop:polyco}]
Let $S_*$ be the subset of $S$ consisting of numerical semigroup elements which have a unique expression.  Note that this is not only the elements less than $\alpha$.  By writing any element of $S$ with a non-unique decomposition as $s = k\alpha + s'$ where $s' \in S_*$, we can rewrite 
\begin{align*}
\frac{1}{\prod\limits_{j=1}^{n-1} \left(1-t^{\frac{\alpha}{a_j}}\right)}  &= \sum_{s \in S_*} t^s + m(\alpha) \sum_{s \in S_*} t^{s + \alpha} + m(2\alpha) \sum_{s \in S_*} t^{s+2\alpha} + ... \\
&= \sum^\infty_{k=0} m(k\alpha) t^{k\alpha} \left(\sum_{s \in S_*} t^s \right) \\
&=  \sum^\infty_{k=0}  { k+n-2 \choose n-2} t^{k\alpha} \left(\sum_{s \in S_*} t^s \right)\\
&= \frac{1}{(1-t^{\alpha})^{n-1}} \left(\sum_{s \in S_*} t^s \right).
\end{align*}

Therefore, we get by \eqref{poly}:
\begin{align*}
\Delta_K(t) = \frac{1-t}{1-t^{\alpha}} \sum_{s \in S_*} t^s &= \frac{1}{1-t^{\alpha}} \sum_{s \in S_*} t^s - t^{s+1} \\
&= \sum^\infty_{k = 0} t^{k\alpha}  \left(\sum_{s \in S_*} t^s - t^{s+1}  \right).
\end{align*}
 We would like to see that all the non-zero coefficients are $\pm 1$.  Note that the non-zero coefficients of $\sum_{s \in S_*} t^s - t^{s+1}$ are all $\pm 1$.  Therefore, in order for there to be some non-zero coefficient of $\Delta_K(t)$ with value other than $\pm 1$, there would have to be two elements, $s$ and $s'$, of $S_*$ which differ by a multiple of $\alpha$.  Without loss of generality $s - s' = k\alpha$ for some $k \geq 1$.  Then, $s$ has a non-unique decomposition in $S$ and thus $s$ is not in $S_*$, a contradiction.  Therefore, the non-zero coefficients must be $\pm 1$.     
\end{proof}

\begin{proof}[Proof of Theorem \ref{thm:fiber-slice}]
Let $K$ be a fiber in a Seifert homology sphere, other than that the unknot in $S^3$.  We will show that $\widetilde{\Delta}_K$ fails the generalized Fox-Milnor condition.  Towards contradiction, suppose that such a factorization of $\widetilde{\Delta}_K$ exists with $f(t) = a_m t^m + \ldots + a_1 t + a_0$.  Notice that the constant coefficient of $f(t)f(t^{-1})$ is equal to $a_m^2 + \ldots + a_1^2 + a_0^2$.  It can be deduced from Equation \ref{poly} that the unsymmetrized polynomial $\Delta_K(t)$ is not a constant, implying that at least two of the coefficients $a_m, \ldots, a_1, a_0$ are nonzero.  Therefore the aforementioned quantity $a_m^2 + \ldots + a_1^2 + a_0^2$ is strictly greater than 1, contradicting Proposition \ref{prop:polyco}.
\end{proof}

\bibliography{references}

\end{document}